%
%
%
%
%
\RequirePackage{fix-cm}
\documentclass[smallcondensed]{svjour3}     
\smartqed  
\usepackage{mathptmx}      
\usepackage{amsmath}
%
\usepackage{latexsym}
\usepackage{graphicx}
\usepackage{xhfill}
%
\newtheorem{algorithm}{Algorithm}


\begin{document}
\newcommand{\xfill}[2][1ex]{{%
  \dimen0=#2\advance\dimen0 by #1
  \leaders\hrule height \dimen0 depth -#1\hfill%
}}
\title{Parallel extragradient - viscosity methods for equilibrium problems and fixed point problems
}

\titlerunning{Parallel extragradient - viscosity methods for EPs and FPPs}        

\author{Dang Van Hieu   
}

\authorrunning{D.V. Hieu} 
\institute{Dang Van Hieu \at
              Department of Mathematics, Vietnam National University,  \\
              334 Nguyen Trai, Thanh Xuan, Hanoi, Vietnam \\
              \email{dv.hieu83@gmail.com}           
}

\date{Received: date / Accepted: date}

\maketitle

\begin{abstract}
In this paper, we propose two parallel extragradient - viscosity methods for finding a particular element in the common solution set of a system of 
equilibrium problems and finitely many fixed point problems. This particular point is the unique solution of a variational inequality problem on the 
common solution set. The main idea of the paper is to combine three methods including the extragradient method, the Mann iteration method, the hybrid 
steepest-descent method with the parallel splitting-up technique to design the algorithms which improve the performance over some existing methods. The strongly convergent theorems are established under the widely used assumptions for 
equilibrium bifunctions.
\keywords{Equilibrium problem, Fixed point problem, Extragradient method, Hybrid method, Parallel computation}
\end{abstract}
\section{Introduction}\label{intro}
Let $C$ be a nonempty closed convex subset of a real Hilbert space $H$. Let $f:C\times C\to \Re$ be a bifunction. The equilibrium problem (EP) 
for $f$ on $C$ is to find $x^*\in C$ such that 
\begin{equation}\label{EP}
f(x^*,y)\ge 0,~\forall y\in C.
\end{equation}
The solution set of EP (\ref{EP}) is denoted by $EP(f,C)$. Mathematically, EP is a generalization of many mathematical models 
including variational inequality problems (VIP), optimization problems and fixed point problems (FPP), nonlinear and linear complemetarity problems, vector minimization problems 
and Nash equilibria problems, see for instance \cite{BO1994,FP2002,K2000,KS1980}. Due to this reason, EP has been recieved a lot of attention by many authors. Some 
notable methods for studying and solving EPs are the proximal point method \cite{K2000,R1976}, the splitting 
proximal method \cite{M2009}, the extragradient method and the Armijo linesearch method \cite{QMH2008}, the gradient-like projection method \cite{H2016}, the hybrid extragradient method \cite{A2013,HMA2016}, the extragradient - viscosity method 
\cite{VSN2013}. 

Let $S:C\to C$ be a mapping. Let us denote $Fix(S)$ by the fixed point set of $S$, i.e., $Fix(S)=\left\{x\in C:x=S(x)\right\}$. The problem of finding 
a common element of the fixed point set of a mapping and the solution set of an equilibrium problem is a task arising in various fields of applicable 
mathematics, sciences, engineering and economy, for example \cite{FP2002}. In \cite{HMA2016}, the authors presented a model which comes from 
Nash-Cournot model \cite{FP2002} for finding a point in the solution set $EP(f,C)\cap Fix(S)$. As a further extension, in this paper we consider 
the following common solution problem.
\begin{problem}\label{VIPoverEPandFPP} Find an element $x^*\in \Omega:=\left(\cap_{i\in I} EP(f_i,C)\right)\bigcap \left(\cap_{j\in J} Fix(S_j)\right)$, where
$f_i:C\times C\to \Re,~i\in I=\left\{1,2,\ldots,N\right\}$ are bifunctions and $S_j:C\to C,~j\in J=\left\{1,2,\ldots,M\right\}$ are mappings. 
\end{problem}
In recent years, the problem of finding a common solution of EPs and/or VIPs and FPPs has been widely and intensively studied by many authors, for example 
\cite{CH2005,ABH2014,AH2014,AH2014b,CGR2012,CGRS2012,H2015Korean,HMA2016,H2016b,H2015Iranian}. Problem \ref{VIPoverEPandFPP} 
includes many previously considered problems. 
When $S_j=I$ for all $j$, Problem \ref{VIPoverEPandFPP} becomes the problem of finding a common solution to EPs which was introduced and 
studied by Combettes and Hirstoaga in \cite{CH2005}. Using the resolvent of a bifunction, the authors proposed a general block-iterative algorithm 
for finding a common solution of EPs. A special case of this problem is the common solutions to variational inequalities problem (CSVIP) mentioned 
and analyzed intensively in \cite{CGRS2012} where the authors proposed an algorithm for CSVIP which combines the extragradient 
method \cite{K1976} with the hybrid (outer approximation) method. In a very recent work \cite{HMA2016}, Problem \ref{VIPoverEPandFPP} 
has been studied and analyzed in the case $M,~N>1$, the authors in \cite{HMA2016} proposed some parallel hybrid extragradient methods 
which combine the extended extragradient method \cite{QMH2008}, the Mann or Halpern iterations, the parallel splitting-up technique \cite{H2015Korean} 
and the outer approximation method (hybrid method). A notable problem in these algorithms 
is that at each iteration we must construct two closed convex subsets $C_n,~Q_n$ of the feasible set $C$ and compute the next approximation being the 
projection of the starting point $x_0$ on the intersection $C_n\cap Q_n$. These can be costly and affect the efficiency of the used method. 

On the other hand, for finding a particular solution of Problem \ref{VIPoverEPandFPP} when $M=N=1$, Maing$\rm \acute{e}$ and Moudafi \cite{MM2008} 
introduced the variational inequality problem: Find $x^*\in EP(f,C)\cap Fix(S)$ such that
\begin{equation}\label{VIP1}
\left\langle F(x^*),y-x^*\right\rangle\ge 0,~\forall y\in EP(f,C)\cap Fix(S),
\end{equation}
where $F:C\to H$ is $\eta$ - strongly 
monotone and $L$ - Lipschitz continuous, i.e., there two positive constants $\eta$ and $L$ such that, for all $x,~y\in C$,
$$ \left\langle F(x)-F(y),x-y\right\rangle \ge \eta ||x-y||^2, $$
$$ ||F(x)-F(y)||\le L ||x-y||. $$
Using the proximal point 
method for EP and the hybrid steepest - descent method introduced by Yamada and Ogura in \cite{Y2001}, 
Maing$\rm \acute{e}$ and Moudafi \cite{MM2008} proposed the following iterative method for VIP (\ref{VIP1}): Choose $x_0\in C$ and
\begin{equation}\label{MM2008}
\begin{cases}
z_n \in C~\mbox{such that}~f(z_n,y)+\frac{1}{r_n}\left\langle y-z_n,z_n-x_n\right\rangle\ge 0,~\forall y\in C,\\ 
x_{n+1}=(1-w)t_n+w St_n~\mbox{with}~t_n=z_n-\alpha_n Fz_n,
\end{cases}
\end{equation}
where $w$, $r_n$, $\alpha_n$ are suitable parameters. Recently, with the same idea, Vuong et al. \cite{VSN2013} have replaced the proximal point method by 
the extragradient method \cite{A2013,QMH2008} for computing $z_n$ in (\ref{MM2008}) and proposed the following extragradient - viscosity 
method for VIP (\ref{VIP1}): Choose $x_0\in C$ and 
\begin{equation}\label{VSN2013}
\begin{cases}
y_n = {\rm argmin} \{ \rho f(x_n, y) +\frac{1}{2}||x_n-y||^2:  y \in C\},\\
z_n = {\rm argmin} \{ \rho f(y_n, y) +\frac{1}{2}||x_n-y||^2:  y \in C\},\\
x_{n+1}=(1-w)t_n+w St_n~\mbox{with}~t_n=z_n-\alpha_n Fz_n,
\end{cases}
\end{equation}
where $w$, $\rho$, $\alpha_n$ are suitable parameters. The advantage of using the viscosity method is that it gives us strongly convergent algorithms 
which have more simple and elegant structures.

In this paper, motivated and inspired by the results in \cite{HMA2016,VSN2013,MM2008}, we propose two parallel 
algorithms for Problem \ref{VIPoverEPandFPP} which do not require constructing two set $C_n$, $Q_n$ and computing the projection onto their 
intersection per each iteration  as in \cite{HMA2016}. As the idea of Maing$\rm \acute{e}$ and Moudafi \cite{MM2008}, 
Vuong et al. \cite{VSN2013}, we also find a particular solution $x^*$ of Problem \ref{VIPoverEPandFPP} which satisfies the following variational 
inequality problem:
\begin{equation}\label{VIP2}
\left\langle F(x^*),y-x^*\right\rangle\ge 0,~\forall y\in \Omega,
\end{equation}
where $F:C\to H$ is $\eta$ - strongly monotone and $L$ - Lipschitz continuous. Let us denote $VIP(F,\Omega)$ by the solution set of VIP (\ref{VIP2}). 
Note that if $F(x)=x-u$ with $u$ being a suggested point in $H$ then VIP (\ref{VIP2}) reduces to the problem of finding an element 
$x^*\in \Omega$ which is the best approximation of $u$, i.e., $x^*=P_{\Omega}(u)$. 
 Firstly, using the extragradient method, we find semultaneously intermediate approximations for each equilibrium problems in the family. After that, among 
obtained approximations, the furthest one from the previous iterate is chosen. Based on this element, we compute in parallel other intermediate iterates for 
fixed point problems in this family. Similarly, we defined the next iterate and obtain the first algorithm. Next, as an improvement of 
finding furthest approximations in the first algorithm, we use convex combinations of component intermediate approximations and propose the second 
parallel algorithm. In our numerical experiments, with the first way, we see that the obtained algorithm 
seems to be more effective than the second one and hybrid methods proposed in \cite{HMA2016}. Some advantages of this performance in comparing with 
that of cyclic methods, specially when the numbers of subproblems $N,~M$ are large, can be found in \cite{AH2014,AH2014b,HMA2016,H2016a} 
and several references therein.

 This paper is organized as follows: In Sec. \ref{pre} we recall some definitions and preliminary results for the further use. Sec. \ref{main} 
deals with proposing the algorithms and proving their convergence. Finally, in Sec. \ref{example} we present a numerical example to illustrate the convergence 
of our algorithms and compare them with the parallel hybrid method in \cite{HMA2016}.
\section{Preliminaries} \label{pre}
Let $C$ be a nonempty closed convex subset of a real Hilbert space $H$. We begin with some definitions and properties of a demicontractive mapping. 
\begin{definition}\label{def1}
A mapping $S:C\to C$ is called: 
\begin{itemize}
\item [$\rm (i)$] \textit{nonexpansive} if $||S(x)-S(y)||\le ||x-y||$ for all $x,~y\in C$.
\item [$\rm (ii)$] \textit{quasi-nonexpansive} if $Fix(S)\ne\emptyset$ and 
$$ 
||S(x)-x^*||\le||x-x^*||,~\forall x^*\in Fix(S),~\forall x\in C.
 $$
\item [$\rm (iii)$] \textit{$\beta$ - demicontractive} if $Fix(S)\ne\emptyset$, and there exists $\beta\in [0,1)$ such that
$$ 
||S(x)-x^*||^2\le ||x-x^*||^2+\beta||x-S(x)||^2,~\forall x^*\in Fix(S),~\forall x\in C.
 $$
\item [$\rm (iv)$] \textit{demiclosed at zero} if, for each sequence $\left\{x_n\right\}\subset C$, $x_n\rightharpoonup x$, and $||S(x_n)-x_n||\to 0$ then 
$S(x)=x$.
\end{itemize}
\end{definition}
From the definitions above, we see that (i) $\Longrightarrow$ (ii) $\Longrightarrow$ (iii). It is well-known that 
each nonexpansive mapping is demiclosed at zero. Problem \ref{VIPoverEPandFPP} was considered in \cite{HMA2016} for nonexpansive mappings. 
In this paper, for more flexibility, we consider the mappings $S_j,~j\in J$ being demicontractive. We have the following result for a demicontractive mapping.
\begin{lemma}\cite[Remark 4.2]{M2008}\label{M2008a}
Assume that $S:C\to C$ be a $\beta$ - demicontractive mapping such that $Fix(S)\ne\emptyset$. 
Then
\begin{itemize}
\item [$\rm (i)$]  $S_w=(1-w)I+wS$ is a quasi-nonexpensive mapping over $C$ for every $w\in 
[0,1-\kappa]$. Furthermore
$$ ||S_wx-x^*||\le ||x-x^*||^2-w(1-\beta-w)||Sx-x||^2,~\forall x^*\in Fix(S),~\forall x\in C. $$
\item [$\rm (ii)$] $Fix(S)$ is closed and convex.
\end{itemize}
\end{lemma}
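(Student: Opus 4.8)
The plan is to prove both parts by elementary Hilbert-space identities together with the single defining inequality of a $\beta$-demicontractive map; no deep tools are needed, so the whole argument is careful bookkeeping of one expansion. (I read the displayed $\kappa$ as $\beta$ and the missing square on the left-hand side of the ``Furthermore'' estimate as typographical slips, and prove the unambiguous statement.)

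For part $\rm (i)$ I would start from the convex-combination identity valid in any Hilbert space,
$$||(1-w)a + w b||^2 = (1-w)||a||^2 + w||b||^2 - w(1-w)||a-b||^2,$$
applied to $a = x - x^*$ and $b = Sx - x^*$, so that $(1-w)a + wb = S_w x - x^*$ and $a - b = x - Sx$. This gives the exact expression
$$||S_w x - x^*||^2 = (1-w)||x-x^*||^2 + w||Sx - x^*||^2 - w(1-w)||Sx - x||^2.$$
Substituting the demicontractive estimate $||Sx - x^*||^2 \le ||x-x^*||^2 + \beta||Sx - x||^2$ (recall $||x - S(x)|| = ||Sx - x||$) into the middle term and collecting the coefficients of $||Sx - x||^2$ yields precisely
$$||S_w x - x^*||^2 \le ||x-x^*||^2 - w(1 - \beta - w)||Sx - x||^2,$$
which is the claimed inequality. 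Quasi-nonexpansiveness is then immediate: for $w \in [0, 1-\beta]$ one has $w \ge 0$ and $1 - \beta - w \ge 0$, so the correction term is nonnegative and $||S_w x - x^*|| \le ||x - x^*||$ for every $x^* \in Fix(S)$; together with the elementary observation that $S_w x = x \Longleftrightarrow Sx = x$ for $w>0$, i.e. $Fix(S_w) = Fix(S) \ne \emptyset$, this is exactly the definition of quasi-nonexpansiveness.

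For part $\rm (ii)$ I would argue closedness and convexity separately, each directly from the demicontractive inequality and the strict bound $\beta < 1$. For closedness, take $\left\{x_n\right\} \subset Fix(S)$ with $x_n \to x$ and apply the defining inequality with fixed point $x_n$ at the argument $x$, namely $||Sx - x_n||^2 \le ||x - x_n||^2 + \beta||x - Sx||^2$; letting $n \to \infty$ the first right-hand term vanishes while the left-hand side tends to $||Sx - x||^2$, so $(1-\beta)||Sx - x||^2 \le 0$ forces $Sx = x$. For convexity, set $z = \lambda p + (1-\lambda)q$ with $p, q \in Fix(S)$, $\lambda \in [0,1]$, and use the identity
$$\lambda||u-p||^2 + (1-\lambda)||u-q||^2 = ||u-z||^2 + \lambda(1-\lambda)||p-q||^2$$
with $u = Sz$; bounding the two left-hand terms by the demicontractive inequality (with fixed points $p$ and $q$) and evaluating $\lambda||z-p||^2 + (1-\lambda)||z-q||^2 = \lambda(1-\lambda)||p-q||^2$ (the same identity with $u=z$) leads again to $(1-\beta)||Sz - z||^2 \le 0$, hence $Sz = z$ and $z \in Fix(S)$.

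The computations are routine, so there is no serious obstacle; the only points requiring care are getting the sign of the cross term $-w(1-w)||Sx-x||^2$ correct in the expansion and checking that the coefficient $1-\beta-w$ is nonnegative precisely on the stated interval $[0,1-\beta]$. It is worth flagging the two evident typographical slips in the statement (the undefined $\kappa$ in place of $\beta$, and the absent square on $||S_w x - x^*||$) so that the estimate invoked in the later convergence analysis is unambiguous.
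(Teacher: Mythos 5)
Your proof is correct in all its steps: the convex-combination identity $||(1-w)a+wb||^2=(1-w)||a||^2+w||b||^2-w(1-w)||a-b||^2$ with $a=x-x^*$, $b=Sx-x^*$ gives exactly the claimed estimate for part (i), the fixed-point-set identification $Fix(S_w)=Fix(S)$ for $w>0$ is the right way to close the quasi-nonexpansiveness claim, and both halves of part (ii) reduce correctly to $(1-\beta)||Sz-z||^2\le 0$. One point of comparison is worth making explicit: the paper contains no proof of this lemma at all — it is imported verbatim, with a citation to Remark 4.2 of Maing\'e's paper \cite{M2008} — so there is no in-paper argument to measure your proposal against. What you have written is the standard derivation (and essentially the one in the cited source), so it serves as a self-contained replacement for the citation. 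You are also right on both diagnostic points: $\kappa$ in the statement is a typographical slip for $\beta$ (the interval should be $[0,1-\beta]$, which is exactly where your coefficient $w(1-\beta-w)$ is nonnegative), and the left-hand side of the displayed inequality is missing a square. The corrected form $||S_wx-x^*||^2\le ||x-x^*||^2-w(1-\beta-w)||Sx-x||^2$ is indeed the one the paper actually invokes later, e.g.\ in the proof of Lemma \ref{lem2} and in the estimate (\ref{eq:27}) inside the proof of Theorem \ref{theo2}, so flagging it is not pedantry but necessary for the convergence analysis to read correctly.
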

Next, we present some concepts of the monotonicity of a bifunction.
\begin{definition}\cite{BO1994} \label{def2} A bifunction $f:C\times C\to \Re$ is said to be
\begin{itemize}
\item [$\rm (i)$] \textit{monotone} on $C$ if 
$$ f(x,y)+f(y,x)\le 0,~\forall x,y\in C; $$
\item [$\rm (ii)$] \textit{pseudomonotone }on $C$ if 
$$ f(x,y)\ge 0 \Longrightarrow f(y,x)\le 0,~\forall x,y\in C;$$
\item [$\rm (iii)$] \textit{Lipschitz-type continuous} on $C$ if there exist two positive constants $c_1,c_2$ such that
$$ f(x,y) + f(y,z) \geq f(x,z) - c_1||x-y||^2 - c_2||y-z||^2, ~ \forall x,y,z \in C.$$
\end{itemize}
\end{definition}
We have the following result about the operator $F$ mentioned in Section \ref{intro}.
\begin{lemma}(cf. \cite[Lemma 3.1]{Y2001})\label{lemY2001}
Suppose that $F:C\to H$ is $\eta$ - strongly monotone and $L$ - Lipschitz continuous operator. 
By using arbitrarily fixed $\mu\in \left(0,\frac{2\eta}{L^2}\right)$. Define the mapping $G:C\to H$ by 
$$ G^\mu(x)=\left(I-\mu F\right)x,~x\in C. $$ 
Then
\begin{itemize}
\item [$\rm (i)$] $G^\mu$ is strictly contractive over $C$ with the contractive constant $\sqrt{1-\mu(2\eta-\mu L^2)}$.
\item [$\rm (ii)$] For all $\nu\in (0,\mu)$,
$$ ||G^{\nu}(y)-x||\le\left(1-\frac{\nu\tau}{\mu}\right)||y-x||+\nu ||F(x)||,$$
where $\tau=1-\sqrt{1-\mu(2\eta-\mu L^2)}\in (0,1)$.
\end{itemize}
\end{lemma}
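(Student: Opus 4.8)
The plan is to prove both parts by direct estimation, with part (i) supplying the contraction constant that drives part (ii).

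For part (i), I would expand the squared norm
$$||G^\mu(x)-G^\mu(y)||^2=||(x-y)-\mu(F(x)-F(y))||^2,$$
producing the three terms $||x-y||^2$, $-2\mu\langle F(x)-F(y),x-y\rangle$, and $\mu^2||F(x)-F(y)||^2$. The $\eta$-strong monotonicity bounds the middle inner product below by $\eta||x-y||^2$, while the $L$-Lipschitz continuity bounds the last term above by $\mu^2L^2||x-y||^2$. Collecting these gives
$$||G^\mu(x)-G^\mu(y)||^2\le\left(1-\mu(2\eta-\mu L^2)\right)||x-y||^2,$$
and taking square roots yields the claimed contraction. The choice $\mu\in\left(0,\frac{2\eta}{L^2}\right)$ forces $2\eta-\mu L^2>0$, so the factor $1-\mu(2\eta-\mu L^2)$ is strictly less than $1$; its nonnegativity follows from $\eta\le L$ (a consequence of combining the two defining inequalities via Cauchy-Schwarz), so the contraction constant is a genuine real number in $[0,1)$.

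For part (ii), the key observation is that for $\nu\in(0,\mu)$ the operator $G^\nu$ is a convex combination of the identity and $G^\mu$, namely
$$G^\nu(y)=y-\nu F(y)=\left(1-\frac{\nu}{\mu}\right)y+\frac{\nu}{\mu}G^\mu(y).$$
Subtracting $x$ and applying the triangle inequality reduces the estimate to bounding $||G^\mu(y)-x||$. I would split this as $||G^\mu(y)-G^\mu(x)||+||G^\mu(x)-x||$, estimate the first term by part (i), and compute the second exactly as $||G^\mu(x)-x||=\mu||F(x)||$, since $G^\mu(x)-x=-\mu F(x)$. Substituting these bounds and writing $\rho=\sqrt{1-\mu(2\eta-\mu L^2)}$ so that $\tau=1-\rho$, the coefficient of $||y-x||$ collapses to $1-\frac{\nu(1-\rho)}{\mu}=1-\frac{\nu\tau}{\mu}$, yielding the stated inequality.

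The computations are elementary; the only genuinely delicate point is the convex-combination identity in part (ii), which is precisely where the hypothesis $\nu<\mu$ is needed to keep both coefficients nonnegative and thus legitimize the triangle-inequality splitting. Everything else is a routine application of the two defining inequalities for $F$ together with the contraction established in part (i).
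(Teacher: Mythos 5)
Your proof is correct and follows essentially the same route as the paper: part (i) by expanding the squared norm and invoking strong monotonicity and Lipschitz continuity, and part (ii) via the convex-combination identity $G^{\nu}=\left(1-\frac{\nu}{\mu}\right)I+\frac{\nu}{\mu}G^{\mu}$ together with the triangle inequality and part (i); your regrouping (splitting $\|G^{\mu}(y)-x\|$ rather than peeling off $-\nu F(x)$ first) bounds exactly the same three terms, just in a different order. Your added observation that $\eta\le L$ (via Cauchy--Schwarz) keeps the contraction constant real is a small refinement the paper leaves implicit.
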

\begin{proof}
(i) From the definition of $G^\mu$, the $\eta$ - strong monotonicity and $L$ - Lipschitz continuity of $F$, we obtain
\begin{eqnarray*}
||G^\mu(x)-G^\mu(y)||^2&=&||(x-y)-\mu(F(x)-F(y))||^2\\ 
&=&||x-y||^2-2\mu\left\langle x-y,F(x)-F(y)\right\rangle+\mu^2||F(x)-F(y)||^2\\
&\le& ||x-y||^2-2\mu\eta||x-y||^2+\mu^2L||x-y||^2\\
&=&(1-\mu(2\eta-\mu L))||x-y||^2.
\end{eqnarray*}
This yields the desired conclusion. Next, we prove claim (ii) in this lemma. From the defition of $G$, we have
\begin{eqnarray*}
||G^{\nu}(y)-x||&=&||\left(y-\nu F(y)\right)-\left(x-\nu F(x)\right)-\nu F(x)||\\ 
&\le&||\left(y-\nu F(y)\right)-\left(x-\nu F(x)\right)||+\nu ||F(x)||\\
&=&||\left(1-\frac{\nu}{\mu}\right)(y-x)+\frac{\nu}{\mu}\left[\left(y-\mu F(y)\right)-\left(x-\mu F(x)\right)\right]||+\nu ||F(x)||\\
&=&||\left(1-\frac{\nu}{\mu}\right)(y-x)+\frac{\nu}{\mu}\left[G^{\mu}(y)-G^{\mu}(x)\right]||+\nu ||F(x)||\\
&\le&\left(1-\frac{\nu}{\mu}\right)||y-x||+\frac{\nu}{\mu}\sqrt{1-\mu(2\eta-\mu L^2)}||y-x||+\nu ||F(x)||\\
&=&\left(1-\frac{\nu\tau}{\mu}\right)||y-x||+\nu ||F(x)||.
\end{eqnarray*}
Lemma \ref{lemY2001} is proved.
\end{proof}
Finally, we have the following technical lemma.
\begin{lemma}\cite[Remark 4.4]{M2008}\label{M2008}
Let $\left\{\epsilon_n\right\}$ be a sequence of non-negative real numbers. Suppose that for any 
integer $m$, there exists an integer $p$ such that $p\ge m$ and $\epsilon_p\le \epsilon_{p+1}$. Let $n_0$ 
be an integer such that $\epsilon_{n_0}\le \epsilon_{n_0+1}$ and define, for all integer $n\ge n_0$,
$$\tau(n)=\max\left\{k\in N:n_0\le k\le n, ~\epsilon_k\le \epsilon_{k+1}\right\}.$$
Then $0\le \epsilon_n \le \epsilon_{\tau(n)+1}$ for all $n\ge n_0$. Furthermore, the sequence 
$\left\{\tau(n)\right\}_{n\ge n_0}$ is non-decreasing and tends to $+\infty$ as $n\to\infty$.
\end{lemma}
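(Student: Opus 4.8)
The plan is to read everything off the definition of $\tau(n)$ as the maximum of a finite, nonempty index set, together with the standing hypothesis that the ``ascent'' indices $k$ (those with $\epsilon_k\le\epsilon_{k+1}$) occur arbitrarily far out. I would treat the four assertions in turn: well-definedness together with the trivial lower bound, monotonicity of $\tau$, the substantive upper bound $\epsilon_n\le\epsilon_{\tau(n)+1}$, and finally the divergence $\tau(n)\to+\infty$.

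First I would note that for each $n\ge n_0$ the set $\left\{k:n_0\le k\le n,~\epsilon_k\le\epsilon_{k+1}\right\}$ is finite and contains $n_0$ (by the choice of $n_0$), so it is nonempty and $\tau(n)$ is well defined; the lower bound $\epsilon_n\ge 0$ is just the non-negativity hypothesis on the sequence. For monotonicity, the index set defining $\tau(n)$ sits inside the one defining $\tau(n+1)$ --- they differ only by the possible extra member $n+1$ --- so the maximum cannot decrease and $\tau(n)\le\tau(n+1)$, making $\left\{\tau(n)\right\}_{n\ge n_0}$ non-decreasing.

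The core of the argument is the bound $\epsilon_n\le\epsilon_{\tau(n)+1}$, which is exactly where the maximality in the definition of $\tau(n)$ is used. I would split into two cases. If $\tau(n)=n$, the defining relation $\epsilon_{\tau(n)}\le\epsilon_{\tau(n)+1}$ is literally $\epsilon_n\le\epsilon_{n+1}=\epsilon_{\tau(n)+1}$. If $\tau(n)<n$, then by maximality every index $k$ with $\tau(n)<k\le n$ fails the ascent condition, i.e.\ $\epsilon_k>\epsilon_{k+1}$; chaining these strict inequalities along $k=\tau(n)+1,\ldots,n$ produces $\epsilon_{\tau(n)+1}>\cdots>\epsilon_n$, which gives the claim. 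I expect the only real obstacle to be the index bookkeeping: one has to be sure that the entire block of indices strictly between $\tau(n)$ and $n+1$ consists of descent steps, since this is precisely what licenses the telescoping chain of strict inequalities.

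Lastly, for $\tau(n)\to+\infty$ I would invoke the hypothesis that for every $m$ there is a $p\ge m$ with $\epsilon_p\le\epsilon_{p+1}$, so that ascent indices form an infinite set. Given an arbitrary bound $K$, I would choose such an ascent index $p\ge\max\left\{K,n_0\right\}$; then for all $n\ge p$ the index $p$ belongs to the set defining $\tau(n)$, whence $\tau(n)\ge p\ge K$. Since $K$ is arbitrary and $\tau$ is non-decreasing, this shows $\tau(n)\to+\infty$, completing the plan.
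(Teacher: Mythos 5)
Your proof is correct. Note that the paper itself offers no proof of this lemma---it is quoted verbatim from Maing\'e \cite[Remark 4.4]{M2008}---and your argument is exactly the standard one behind that result: well-definedness of $\tau(n)$ because the index set is finite and contains $n_0$, monotonicity from the nesting of the index sets, the two-case maximality argument (with the chain of strict descents $\epsilon_k>\epsilon_{k+1}$ for $\tau(n)<k\le n$) for the key bound $\epsilon_n\le\epsilon_{\tau(n)+1}$, and the unboundedness of the ascent indices for $\tau(n)\to+\infty$.
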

\section{Main results}\label{main}
In this section, we propose two parallel algorithms for finding a solution of Problem \ref{VIPoverEPandFPP} and prove their convergence. The first algorithm 
is designed as follows.
\begin{algorithm}\label{Alg1} \textbf{Initialization.} Choose $x_0\in C$. The parameters $\rho,~\alpha_n,~\beta^j_n$ satisfy Condition 3 below.\\
\textbf{Step 1.} Find semultaneously approximations $y_n^i$, $i\in I$,
$$y_n^i = {\rm argmin} \{ \rho f_i(x_n, y) +\frac{1}{2}||x_n-y||^2:  y \in C\}.$$
\textbf{Step 2.} Find semultaneously approximations $z_n^i$, $i\in I$,
$$ z_n^i = {\rm argmin} \{ \rho f_i(y_n^i, y) +\frac{1}{2}||x_n-y||^2:  y \in C\}. $$
\textbf{Step 3.} Compute semultaneously approximations $u_n^j$, $j\in J$,
$$u_n^j=(1-\beta^j_n)t_n+\beta^j_n S_j t_n,$$
where $t_n=\bar{z}_n-\alpha_n F(\bar{z}_n)$ and $ \bar{z}_n = {\rm argmax}\{||z_n^i - x_n||: i\in I\}$.\\
\textbf{Step 4.} Pick $ x_{n+1} = {\rm argmax}\{||u_n^j - t_n||: j \in J\}.$ Set $n=n+1$ and go back \textbf{Step 1}.
\end{algorithm}
\begin{remark}
The intermediate approximation $\bar{z}_n$ in Step 3 of Algorithm \ref{Alg1} is the furthest element from $x_n$ among all ones $z_n^i,~i\in I$ 
and the next iterate $x_{n+1}$ in Step 4 is the furthest element from $t_n$ among all approximations $u_n^j,~j\in J$. 
\end{remark}
Throughout this paper, from the definitions of $\bar{z}_n$ and $x_{n+1}$ in Algorithm \ref{Alg1}, we denote $i_n\in I$ and $j_n\in J$ by the indices such that
$\bar{z}_n=z_n^{i_n}$ and $x_{n+1}=u_n^{j_n}$. For the sake of 
simplicity, we also write $\bar{y}_n:=y_n^{i_n}$.
In order to establish the convergence of Algorithm \ref{Alg1}, we install the following conditions for the bifunctions $f_i$, the mappings $S_j$ and 
the control parameters $\rho$, $\alpha_n$ and $\beta_n$.

\textbf{Condition 1}

\begin{itemize}
\item[\rm A1.]~$f_i$ is pseudomonotone on $C$ and $f(x,x)=0$ for all $x\in C$;
\item [\rm A2.]~$f_i$ is Lipschitz-type continuous on $C$ with the constants $c_1,c_2$;
\item [\rm A3.]~$\lim\sup_{n\to\infty}f_i(x_n,y)\le f(x,y)$ for each sequence $\left\{x_n\right\}$ converging weakly to $x$.
\item [\rm A4.]~$f_i(x,.)$ is convex and subdifferentiable on $C$ for every fixed $x\in C$.
\end{itemize}

\textbf{Condition 2}

\begin{itemize}
\item [\rm B1.]~$S_j$ is $\beta$ - demicontractive on $C$, where $\beta\in [0,1)$; 
\item [\rm B2.]~$S_j$ is demiclosed at zero.
\end{itemize}

\textbf{Condition 3}

\noindent(i)~ $0<\rho<\min\left\{\frac{1}{2c_1},\frac{1}{2c_2}\right\}$;\qquad (ii)~ $\lim\limits_{n\to\infty}\alpha_n=0,\sum\limits_{n=1}^\infty \alpha_n=+\infty$; \qquad
(iii) ~$0<a\le \beta_n^j<\frac{1-\beta}{2}$.

Hypothesis A2 was introduced by Mastroeni in \cite{M2003}. It is necessary to imply the convergence of  the auxiliary principle method for 
EPs. If $A:C\to H$ is a $L$ - Lipschitz continuous operator then the bifunction $f(x,y)=\left\langle A(x),y-x\right\rangle$ satisfies hypothesis A2.
It is easy to show that if $f_i$ satisfies conditions A1-A4 then $EP(f_i,C)$ is closed and convex (see, for instance \cite{QMH2008}). Under Condition 2, 
from Lemma \ref{M2008a}, $Fix(S_j)$ is closed and convex. Thus, $\Omega$ is also convex and closed. In this paper, we assume that $\Omega$ is nonempty. 
Hence, it follows from the assumptions of the operator $F$ that VIP (\ref{VIP2}) has the unique solution on $\Omega$, denoted by $x^*$.
We need the following lemmas.
\begin{lemma}\label{lem.A11}\cite{A2013,QMH2008}
Suppose that $\left\{x_n\right\},~ \left\{ y^i_n\right\},~ \left\{z^i_n\right\}$ are the sequences defined by Algorithm $\ref{Alg1}$. Then
\begin{itemize}
\item [$\rm (i)$] $\rho \left(f_i(x_n,y)-f_i(x_n,y_n^i)\right) \ge \left\langle y_n^i-x_n, y_n^i-y\right\rangle, \forall y\in C,~\forall i\in I.$
\item [$\rm (ii)$] 
$
||z_n^i-x^*||^2\le ||x_n-x^*||^2-(1-2\rho c_1)||y_n^i-x_n||^2-(1-2\rho c_2)||y_n^i-z_n^i||^2,~\forall i\in I .
$
\end{itemize}
\end{lemma}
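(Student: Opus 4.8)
The plan is to treat the two statements separately, since part (i) is exactly the tool needed for part (ii). For (i), I would start from the first-order optimality condition for the minimization problem defining $y_n^i$. The objective $h(y)=\rho f_i(x_n,y)+\frac12\|x_n-y\|^2$ is convex on $C$ by A4, so the minimizer is characterized by $0\in\partial h(y_n^i)+N_C(y_n^i)$. Writing $\partial h(y_n^i)=\rho\,\partial_2 f_i(x_n,y_n^i)+(y_n^i-x_n)$, there exists $w\in\partial_2 f_i(x_n,y_n^i)$ with $\langle\rho w+y_n^i-x_n,\,y-y_n^i\rangle\ge 0$ for all $y\in C$. Combining this with the subgradient inequality $f_i(x_n,y)-f_i(x_n,y_n^i)\ge\langle w,y-y_n^i\rangle$ immediately yields (i).

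For (ii), I would run the identical optimality argument for $z_n^i$ to obtain $\rho\big(f_i(y_n^i,y)-f_i(y_n^i,z_n^i)\big)\ge\langle z_n^i-x_n,z_n^i-y\rangle$ for all $y\in C$, and then specialize to $y=x^*$. Since $x^*\in\Omega\subset EP(f_i,C)$ we have $f_i(x^*,y_n^i)\ge 0$, so pseudomonotonicity (A1) gives $f_i(y_n^i,x^*)\le 0$; discarding this nonpositive term yields $\langle z_n^i-x_n,z_n^i-x^*\rangle\le -\rho f_i(y_n^i,z_n^i)$. The next step is to lower-bound $f_i(y_n^i,z_n^i)$ using Lipschitz-type continuity (A2) on the triple $(x_n,y_n^i,z_n^i)$, giving $f_i(y_n^i,z_n^i)\ge f_i(x_n,z_n^i)-f_i(x_n,y_n^i)-c_1\|x_n-y_n^i\|^2-c_2\|y_n^i-z_n^i\|^2$. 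Feeding part (i) with the choice $y=z_n^i$ to control $f_i(x_n,z_n^i)-f_i(x_n,y_n^i)$, I arrive at the scalar inequality $\langle z_n^i-x_n,z_n^i-x^*\rangle\le -\langle y_n^i-x_n,y_n^i-z_n^i\rangle+\rho c_1\|x_n-y_n^i\|^2+\rho c_2\|y_n^i-z_n^i\|^2$.

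Finally I would convert both inner products into squared norms via the identity $2\langle a,b\rangle=\|a\|^2+\|b\|^2-\|a-b\|^2$, applied once to $a=z_n^i-x_n,\ b=z_n^i-x^*$ and once to $a=y_n^i-x_n,\ b=y_n^i-z_n^i$. After substitution the two copies of $\|z_n^i-x_n\|^2$ cancel, and collecting the remaining terms gives exactly $\|z_n^i-x^*\|^2\le\|x_n-x^*\|^2-(1-2\rho c_1)\|y_n^i-x_n\|^2-(1-2\rho c_2)\|y_n^i-z_n^i\|^2$.

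The main obstacle is bookkeeping rather than conceptual: one must chain three inequalities (the two optimality conditions and the Lipschitz-type estimate) in the correct direction, watch the signs when dropping $\rho f_i(y_n^i,x^*)$, and align the cross terms after polarization so that the cancellation of $\|z_n^i-x_n\|^2$ is clean. I also note that Condition 3(i), $0<\rho<\min\{1/(2c_1),1/(2c_2)\}$, is precisely what makes the coefficients $1-2\rho c_1$ and $1-2\rho c_2$ positive, so that (ii) is a genuine Fej\'er-type estimate toward $x^*$.
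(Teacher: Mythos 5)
Your proof is correct. The paper itself gives no proof of this lemma---it is quoted directly from \cite{A2013,QMH2008}---and your argument (first-order optimality condition plus the subgradient inequality for part (i); then pseudomonotonicity at $x^*$, the Lipschitz-type estimate, part (i) with $y=z_n^i$, and polarization for part (ii)) is precisely the standard proof given in those references, and it matches the computation the paper itself reproduces when proving Lemma \ref{lem1}, where the optimality inequality for $z_n^i$ is derived ``similarly to Lemma \ref{lem.A11}(i)''.
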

\begin{lemma}\label{lem1}
Suppose that $\left\{x_n\right\},~ \left\{ y^i_n\right\},~ \left\{z^i_n\right\}$ are the sequences defined by Algorithm $\ref{Alg1}$. Then, for all $y\in C$,
\begin{eqnarray*}
\rho f_i(y^i_n,y)&\ge& \left\langle y_n^i-x_n, y_n^i-z_n^i\right\rangle -c_1\rho ||y_n^i-x_n||^2 -c_2\rho ||z_n^i-y_n^i||^2 + \left\langle z_n^i-x_n, z_n^i-y\right\rangle. 
\end{eqnarray*}
\end{lemma}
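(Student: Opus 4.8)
The plan is to chain together the two variational (optimality) inequalities arising from the two argmin subproblems defining $y_n^i$ and $z_n^i$, and then to absorb the resulting bifunction value using the Lipschitz-type continuity hypothesis A2.

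First I would record the variational inequality characterizing $z_n^i$. Since $z_n^i$ minimizes the convex function $y\mapsto \rho f_i(y_n^i,y)+\frac12\|x_n-y\|^2$ over $C$, precisely the same subdifferential argument that produces Lemma \ref{lem.A11}(i)---with the base point $x_n$ in the first slot replaced by $y_n^i$, the minimizer $y_n^i$ replaced by $z_n^i$, and the quadratic still centered at $x_n$---yields, for all $y\in C$,
\[
\rho\bigl(f_i(y_n^i,y)-f_i(y_n^i,z_n^i)\bigr)\ge \langle z_n^i-x_n,z_n^i-y\rangle .
\]
This already isolates the target term $\rho f_i(y_n^i,y)$ together with the inner product $\langle z_n^i-x_n,z_n^i-y\rangle$, and leaves me only to bound $\rho f_i(y_n^i,z_n^i)$ from below.

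Next I would apply A2 to the triple $(x_n,y_n^i,z_n^i)$, which gives
\[
f_i(y_n^i,z_n^i)\ge f_i(x_n,z_n^i)-f_i(x_n,y_n^i)-c_1\|x_n-y_n^i\|^2-c_2\|y_n^i-z_n^i\|^2 ,
\]
and then invoke Lemma \ref{lem.A11}(i) at the particular point $y=z_n^i$ to replace $f_i(x_n,z_n^i)-f_i(x_n,y_n^i)$ by its lower bound $\tfrac1\rho\langle y_n^i-x_n,y_n^i-z_n^i\rangle$. Multiplying through by $\rho>0$ then converts this into a lower bound for $\rho f_i(y_n^i,z_n^i)$ expressed in terms of $\langle y_n^i-x_n,y_n^i-z_n^i\rangle$ and the two squared-norm penalty terms.

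Finally I would substitute this lower bound into the first displayed inequality; the claimed estimate drops out immediately after identifying $\|x_n-y_n^i\|^2=\|y_n^i-x_n\|^2$ and $\|y_n^i-z_n^i\|^2=\|z_n^i-y_n^i\|^2$. I do not expect any genuine obstacle here beyond careful bookkeeping of signs and of which point sits in which slot of the bifunction: the only step that needs any thought is establishing the variational inequality for $z_n^i$, and since this is a verbatim repetition of the reasoning behind Lemma \ref{lem.A11}(i) applied to the second argmin subproblem, it can be stated and used without being reproved.
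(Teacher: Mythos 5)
Your proposal is correct and follows essentially the same route as the paper's own proof: both rest on Lemma \ref{lem.A11}(i) evaluated at $y=z_n^i$, the Lipschitz-type inequality A2 applied to the triple $(x_n,y_n^i,z_n^i)$, and the analogous optimality inequality for the second argmin subproblem defining $z_n^i$, combined in the obvious way. The only difference is the order of assembly (you start from the $z_n^i$-inequality and then bound $\rho f_i(y_n^i,z_n^i)$ from below, while the paper derives that bound first), which is immaterial.
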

\begin{proof}
Substituting $y=z_n^i\in C$ into inequality (i) of Lemma \ref{lem.A11}, we obtain
\begin{equation}\label{eq:12a}
\rho \left(f_i(x_n,z_n^i)-f_i(x_n,y_n^i)\right) \ge \left\langle y_n^i-x_n, y_n^i-z_n^i\right\rangle.
\end{equation}
From the Lipschitz-type continuity of $f_i$ and the relation (\ref{eq:12a}), we have
\begin{eqnarray}
\rho f_i(y_n^i,z_n^i)&\ge&\rho \left( f_i(x_n,z_n^i)-f_i(x_n,y_n^i)\right)-c_1\rho ||y_n^i-x_n||^2 -c_2\rho ||z_n^i-y_n^i||^2 \nonumber\\
&\ge&\left\langle y_n^i-x_n, y_n^i-z_n^i\right\rangle -c_1\rho ||y_n^i-x_n||^2  -c_2\rho ||z_n^i-y_n^i||^2.\label{eq:12b}
\end{eqnarray}
Similarly to Lemma \ref{lem.A11}(i), from the definition of $z_n^i$, we obtain
$$ \rho \left(f_i(y^i_n,y)-f_i(y^i_n,z_n^i)\right) \ge \left\langle z_n^i-x_n, z_n^i-y\right\rangle, \forall y\in C.$$
Thus,
\begin{equation}\label{eq:12c}
\rho f_i(y^i_n,y) \ge \rho f_i(y^i_n,z_n^i)+ \left\langle z_n^i-x_n, z_n^i-y\right\rangle, \forall y\in C.
\end{equation}
Combining the relations (\ref{eq:12b}) and (\ref{eq:12c}), we obtain
\begin{eqnarray*}
\rho f_i(y^i_n,y)&\ge& \left\langle y_n^i-x_n, y_n^i-z_n^i\right\rangle -c_1\rho ||y_n^i-x_n||^2 -c_2\rho ||z_n^i-y_n^i||^2 + \left\langle z_n^i-x_n, z_n^i-y\right\rangle
\end{eqnarray*}
for all $y\in C$. Lemma \ref{lem1} is proved.
\end{proof}
\begin{lemma}\label{lem2}
Suppose that $\left\{x_n\right\},~ \left\{ \bar{y}_n\right\},~ \left\{\bar{z}_n\right\}$ are the sequences defined by Algorithm $\ref{Alg1}$. Then
\begin{eqnarray*}
||x_{n+1}-x^*||^2&\le&||x_n-x^*||^2-(1-2\rho c_1)||\bar{y}_n-x_n||^2-(1-2\rho c_2)||\bar{y}_n-\bar{z}_n||^2\\
&&-||x_{n+1}-\bar{z}_n||^2-2\alpha_n\left\langle x_{n+1}-x^*,F(\bar{z}_n)\right\rangle.
\end{eqnarray*}
\end{lemma}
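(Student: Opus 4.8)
The plan is to derive the estimate in two independent stages and then glue them together. Since $x^*\in\Omega$, we have $x^*\in EP(f_i,C)$ for every $i$ and $x^*\in Fix(S_j)$ for every $j$, so $x^*$ serves as the reference point in both earlier lemmas. First I control the equilibrium part: applying Lemma \ref{lem.A11}(ii) at the distinguished index $i=i_n$ and recalling $\bar{z}_n=z_n^{i_n}$, $\bar{y}_n=y_n^{i_n}$, I obtain immediately
\[
||\bar{z}_n-x^*||^2\le ||x_n-x^*||^2-(1-2\rho c_1)||\bar{y}_n-x_n||^2-(1-2\rho c_2)||\bar{y}_n-\bar{z}_n||^2 .
\]
Hence it suffices to prove the single auxiliary inequality
\[
||x_{n+1}-x^*||^2\le ||\bar{z}_n-x^*||^2-||x_{n+1}-\bar{z}_n||^2-2\alpha_n\langle x_{n+1}-x^*,F(\bar{z}_n)\rangle , \qquad (*)
\]
after which substitution of the equilibrium estimate into $(*)$ yields the claim.

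To establish $(*)$ I first extract a sharp one-step bound from the Mann/steepest-descent step. Writing $x_{n+1}=u_n^{j_n}=(1-\beta_n^{j_n})t_n+\beta_n^{j_n}S_{j_n}t_n$, Lemma \ref{M2008a}(i) applied to $S_{j_n}$ with weight $w=\beta_n^{j_n}$ (admissible because Condition 3(iii) forces $\beta_n^{j_n}\in[0,1-\beta]$, and $x^*\in Fix(S_{j_n})$) gives
\[
||x_{n+1}-x^*||^2\le ||t_n-x^*||^2-\beta_n^{j_n}(1-\beta-\beta_n^{j_n})||S_{j_n}t_n-t_n||^2 .
\]
Since $x_{n+1}-t_n=\beta_n^{j_n}(S_{j_n}t_n-t_n)$, I have $||S_{j_n}t_n-t_n||^2=(\beta_n^{j_n})^{-2}||x_{n+1}-t_n||^2$, so the surplus coefficient collapses to $(1-\beta-\beta_n^{j_n})/\beta_n^{j_n}$. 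The upper restriction $\beta_n^{j_n}<(1-\beta)/2$ in Condition 3(iii) makes this ratio strictly larger than $1$, whence
\[
||x_{n+1}-x^*||^2\le ||t_n-x^*||^2-||x_{n+1}-t_n||^2 . \qquad (\star)
\]

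It remains to convert $(\star)$ into $(*)$ by expanding around $\bar{z}_n$ using $t_n=\bar{z}_n-\alpha_n F(\bar{z}_n)$, i.e.\ $\bar{z}_n-t_n=\alpha_n F(\bar{z}_n)$. Expanding the two squared norms on the right of $(\star)$ gives
\[
||t_n-x^*||^2=||\bar{z}_n-x^*||^2-2\alpha_n\langle \bar{z}_n-x^*,F(\bar{z}_n)\rangle+\alpha_n^2||F(\bar{z}_n)||^2 ,
\]
\[
||x_{n+1}-t_n||^2=||x_{n+1}-\bar{z}_n||^2+2\alpha_n\langle x_{n+1}-\bar{z}_n,F(\bar{z}_n)\rangle+\alpha_n^2||F(\bar{z}_n)||^2 .
\]
The useful point is that upon subtraction the two terms $\alpha_n^2||F(\bar{z}_n)||^2$ cancel, while the inner products recombine through $(\bar{z}_n-x^*)+(x_{n+1}-\bar{z}_n)=x_{n+1}-x^*$ into the single term $-2\alpha_n\langle x_{n+1}-x^*,F(\bar{z}_n)\rangle$; this is precisely $(*)$, and combining $(*)$ with the equilibrium estimate finishes the proof.

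The only genuinely delicate step is $(\star)$: the term $-||x_{n+1}-\bar{z}_n||^2$ in the final bound is not free but is paid for by the demicontractive surplus $\beta_n^{j_n}(1-\beta-\beta_n^{j_n})||S_{j_n}t_n-t_n||^2$, and extracting a full $-||x_{n+1}-t_n||^2$ out of it is exactly what the upper bound $\beta_n^{j_n}<(1-\beta)/2$ in Condition 3(iii) is engineered to guarantee. Everything else is routine expansion of squared norms.
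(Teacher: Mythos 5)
Your proof is correct and follows essentially the same route as the paper's: apply Lemma \ref{lem.A11}(ii) at the index $i=i_n$, use the demicontractive estimate of Lemma \ref{M2008a}(i) together with Condition 3(iii) (which makes the ratio $\frac{1-\beta-\beta_n^{j_n}}{\beta_n^{j_n}}\ge 1$) to obtain $||x_{n+1}-x^*||^2\le ||t_n-x^*||^2-||x_{n+1}-t_n||^2$, and then expand the squared norms around $\bar{z}_n$ so the $\alpha_n^2||F(\bar{z}_n)||^2$ terms cancel and the inner products recombine into $-2\alpha_n\left\langle x_{n+1}-x^*,F(\bar{z}_n)\right\rangle$. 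The only difference is presentational (you isolate the intermediate inequality $(*)$ explicitly before substituting the equilibrium estimate), not mathematical.
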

\begin{proof}
Substituting $i=i_n$ into the second inequality of Lemma \ref{lem.A11}, we obtain 
\begin{equation}\label{eq:1*}
||\bar{z}_n-x^*||^2\le ||x_n-x^*||^2-(1-2\rho c_1)||\bar{y}_n-x_n||^2-(1-2\rho c_2)||\bar{y}_n-\bar{z}_n||^2.
\end{equation}
From the definitions of $x_{n+1}$ and $u_n^{j_n}$,
$$ ||x_{n+1}-t_n||^2=||u_n^{j_n}-t_n||^2=(\beta_n^j)^2||t_n-S_{j_n}t_n||^2 $$
which implies that
\begin{equation}\label{eq:2*}
||t_n-S_{j_n}t_n||^2=\frac{1}{(\beta_n^j)^2}||x_{n+1}-t_n||^2.
\end{equation}
Set $S_{j,\beta^j_n}=(1-\beta^j_n)I+\beta^j_nS_j$. From the definition of $x_{n+1}$, we have $x_{n+1}=S_{j_n,\beta^j_n}t_n$. Since $S_{j_n}$ is 
$\beta$ - demicontractive, it follows from Lemma \ref{M2008a} that $S_{j_n,\beta_n^{j_n}}$ is quasi-nonexpensive and 
\begin{eqnarray*}
||x_{n+1}-x^*||^2&=&||S_{j_n,\beta_n^{j_n}}t_n-x^*||^2\\
&\le&||t_n-x^*||^2-\beta_n^{j_n}(1-\beta-\beta_n^{j_n})||S_{j_n}t_n-t_n||^2\\
&=&||t_n-x^*||^2-\frac{1-\beta-\beta_n^{j_n}}{\beta_n^{j_n}}||x_{n+1}-t_n||^2
\end{eqnarray*}
in which the last equality is followed from the relation (\ref{eq:2*}). 
From the assumption of $\beta_n^{j_n}$, we see that $\frac{1-\beta-\beta_n^{j_n}}{\beta_n^{j_n}}\ge 1$. Thus, it follows from the last inequality that
\begin{equation}\label{eq:3**}
||x_{n+1}-x^*||^2\le||t_n-x^*||^2-||x_{n+1}-t_n||^2.
\end{equation}
From the definition of $t_n$, we have
\begin{eqnarray*}
||t_n-x^*||^2-||x_{n+1}-t_n||^2&=&||\bar{z}_n-\alpha_n F(\bar{z}_n)-x^*||^2-||x_{n+1}-(\bar{z}_n-\alpha_n F(\bar{z}_n))||^2\\
&=&||\bar{z}_n-x^*||^2-2\alpha_n \left\langle x_{n+1}-x^*,F(\bar{z}_n)\right\rangle-||x_{n+1}-\bar{z}_n||^2\\
&\le&||x_n-x^*||^2-(1-2\rho c_1)||\bar{y}_n-x_n||^2-(1-2\rho c_2)||\bar{y}_n-\bar{z}_n||^2\\
&&-2\alpha_n \left\langle x_{n+1}-x^*,F(\bar{z}_n)\right\rangle-||x_{n+1}-\bar{z}_n||^2
\end{eqnarray*}
in which the last inequality is followed from the relation (\ref{eq:1*}). The last inequality and the relation (\ref{eq:3**}) lead to the desired 
conclusion.
\end{proof}
\begin{lemma}\label{lem3}
The sequences $\left\{x_n\right\}$, $\left\{y^i_n\right\}$, $\left\{z^i_n\right\}$, $\left\{u^j_n\right\}$, $\left\{t_n\right\}$ are bounded for all $i\in I$ and $j\in J$.
\end{lemma}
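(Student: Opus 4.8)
The plan is to reduce the entire statement to the boundedness of $\{x_n\}$, since the four remaining families are controlled by $\{x_n\}$ through estimates already at hand. To bound $\{x_n\}$ I would assemble a contraction-type recursion for $a_n:=||x_n-x^*||$ from three ingredients. First, applying Lemma \ref{lem.A11}(ii) with $i=i_n$ (so that $z_n^{i_n}=\bar z_n$ and $y_n^{i_n}=\bar y_n$) and invoking Condition 3(i), which makes both coefficients $1-2\rho c_1$ and $1-2\rho c_2$ strictly positive, gives $||\bar z_n-x^*||\le||x_n-x^*||$. Second, since $x_{n+1}=u_n^{j_n}=S_{j_n,\beta_n^{j_n}}t_n$ with $\beta_n^{j_n}\in(0,(1-\beta)/2)\subset[0,1-\beta]$ by Condition 3(iii), Lemma \ref{M2008a}(i) makes $S_{j_n,\beta_n^{j_n}}$ quasi-nonexpansive, whence $||x_{n+1}-x^*||\le||t_n-x^*||$. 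Third, writing $t_n=\bar z_n-\alpha_n F(\bar z_n)=G^{\alpha_n}(\bar z_n)$ in the notation of Lemma \ref{lemY2001}, the operator estimate will control $||t_n-x^*||$.

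For this third ingredient I would fix $\mu\in(0,2\eta/L^2)$ once and for all; since $\alpha_n\to 0$ (Condition 3(ii)), there is $n_0$ with $\alpha_n<\mu$ for all $n\ge n_0$. For such $n$, Lemma \ref{lemY2001}(ii) with $\nu=\alpha_n$, $y=\bar z_n$, $x=x^*$ gives $||t_n-x^*||\le(1-\alpha_n\tau/\mu)||\bar z_n-x^*||+\alpha_n||F(x^*)||$, where $\tau\in(0,1)$ is the constant of that lemma. Chaining the three estimates yields, for $n\ge n_0$,
\begin{equation*}
a_{n+1}\le\left(1-\frac{\alpha_n\tau}{\mu}\right)a_n+\alpha_n||F(x^*)||
=\left(1-\frac{\alpha_n\tau}{\mu}\right)a_n+\frac{\alpha_n\tau}{\mu}\cdot\frac{\mu||F(x^*)||}{\tau}.
\end{equation*}
Because $\alpha_n\tau/\mu\in(0,1)$ for $n\ge n_0$, the right-hand side is a convex combination of $a_n$ and $\mu||F(x^*)||/\tau$, so a one-line induction gives $a_n\le\max\{a_{n_0},\mu||F(x^*)||/\tau\}$ for all $n\ge n_0$; the finitely many earlier terms are trivially bounded, hence $\{x_n\}$ is bounded.

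With $\{x_n\}$ bounded the remaining families follow at once. Lemma \ref{lem.A11}(ii) gives $||z_n^i-x^*||\le||x_n-x^*||$ for every $i$, so $\{z_n^i\}$ (and in particular $\{\bar z_n\}$) is bounded; the same inequality, keeping the positive term $(1-2\rho c_1)||y_n^i-x_n||^2$, bounds $||y_n^i-x_n||$ and therefore $\{y_n^i\}$. As $F$ is $L$-Lipschitz it is bounded on the bounded set $\{\bar z_n\}$, so $t_n=\bar z_n-\alpha_n F(\bar z_n)$ is bounded; finally $u_n^j=S_{j,\beta_n^j}t_n$ with $\beta_n^j\in[0,1-\beta]$ is a quasi-nonexpansive image by Lemma \ref{M2008a}(i), giving $||u_n^j-x^*||\le||t_n-x^*||$ and so boundedness of $\{u_n^j\}$.

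I expect the only genuine obstacle to be the clean production of the recursion, that is, marrying the hybrid steepest-descent estimate of Lemma \ref{lemY2001} with the algorithm's step size: that lemma requires $\nu<\mu$, which is precisely why the hypothesis $\alpha_n\to 0$ (not merely $\alpha_n$ bounded) enters, and why the uniform bound is first established from the index $n_0$ onward. Everything else—discarding the manifestly nonpositive terms under Condition 3(i) and propagating boundedness along the nonexpansive-type inequalities—is routine.
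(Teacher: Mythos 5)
Your proof is correct and takes essentially the same route as the paper: the same three estimates (Lemma \ref{lem.A11}(ii) under Condition 3(i), quasi-nonexpansiveness of $S_{j_n,\beta_n^{j_n}}$ via Lemma \ref{M2008a}(i), and the steepest-descent bound of Lemma \ref{lemY2001}(ii)) chained into a convex-combination recursion that is settled by a max-induction. The only cosmetic differences are that you anchor the recursion at $||x_n-x^*||$ where the paper anchors it at $||t_n-x^*||$, you make the starting index $n_0$ (where $\alpha_n<\mu$ first holds) explicit rather than assuming $\{\alpha_n\}\subset(0,\mu)$ without loss of generality, and you propagate boundedness to $\{z_n^i\}$, $\{y_n^i\}$, $\{u_n^j\}$, $\{t_n\}$ by slightly different but equally valid inequalities.
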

\begin{proof}
For a fixed $\mu\in \left(0,\frac{2\eta}{L^2}\right)$. Since $\alpha_n\to 0$, we can assume that $\left\{\alpha_n\right\}\subset (0,\mu)$. From the definitions 
of $G^\mu$ in Lemma \ref{lemY2001} and of $t_n$ in Algorithm \ref{Alg1}, we have $t_{n}=G^{\alpha_{n}}(\bar{z}_{n})$. Using Lemma 
\ref{lemY2001}(ii) for $y=\bar{z}_{n}$, $x=x^*$ and $\nu=\alpha_n$, we obtain
\begin{equation}\label{eq:4*}
||t_n-x^*||=||G^{\alpha_{n}}(\bar{z}_{n})-x^*||\le\left(1-\frac{\alpha_{n}\tau}{\mu}\right)||\bar{z}_{n}-x^*||+\alpha_{n} ||F(x^*)||,
\end{equation}
where $\tau$ is defined as in Lemma \ref{lemY2001}. From the relation (\ref{eq:1*}) and the hypothesises of $\rho$, we obtain
\begin{equation}\label{eq:5*}
||\bar{z}_n-x^*||\le ||x_n-x^*||.
\end{equation}
From the relation (\ref{eq:3**}) with $n:=n-1$, we have 
\begin{equation*}
||x_{n}-x^*||^2\le||t_{n-1}-x^*||^2-||x_{n}-t_{n-1}||^2
\end{equation*}
which implies 
\begin{equation}\label{eq:6*}
||x_{n}-x^*||\le||t_{n-1}-x^*||.
\end{equation}
Thus, it follows from the relation (\ref{eq:5*}) that
\begin{equation*}
||\bar{z}_n-x^*||\le ||t_{n-1}-x^*||.
\end{equation*}
This together with (\ref{eq:4*}) implies that
\begin{eqnarray*}
||t_n-x^*||&\le&\left(1-\frac{\alpha_{n}\tau}{\mu}\right)||t_{n-1}-x^*||+\alpha_{n} ||F(x^*)||\\ 
&=&\left(1-\frac{\alpha_{n}\tau}{\mu}\right)||t_{n-1}-x^*||+\frac{\alpha_{n}\tau}{\mu}\left(\frac{\mu}{\tau}||F(x^*)||\right) \\
&\le&\max \left\{||t_{n-1}-x^*||,\frac{\mu}{\tau}||F(x^*)||\right\}.
\end{eqnarray*}
Thus 
$$ ||t_n-x^*||\le \max \left\{||t_{0}-x^*||,\frac{\mu}{\tau}||F(x^*)||\right\},~\forall n\ge 0.$$
This implies the boundedness of $\left\{t_n\right\}$. Hence, from (\ref{eq:5*}) and (\ref{eq:6*}), we see that the sequences $\left\{x_n\right\}$ and 
$\left\{\bar{z}_n\right\}$ are bounded. It follows from the definitions of $\bar{z}_n$ and $x_{n+1}$ that 
\begin{eqnarray*}
&&||z_n^i-x_n||\le||\bar{z}_n-x_n||,~\forall i\in I,\\ 
&& ||u_n^j-t_n||\le||x_{n+1}-t_n||,~\forall j\in J.
\end{eqnarray*}
Thus, the sequences $\left\{z^i_n\right\}$, $\left\{u^j_n\right\}$ are also bounded. Finally, the boundedness of $\left\{y^i_n\right\}$ is followed from 
Lemma \ref{lem.A11}(ii), the hypothesis of $\rho$ and the boundedness of the sequences $\left\{z^i_n\right\}$, $\left\{x_n\right\}$.
\end{proof}
\begin{theorem}\label{theo1}
Assume that Conditions 1, 2, 3 hold and the operator $F:C\to H$ is $\eta$ - 
strongly monotone and $L$ - Lipschitz continuous. In addition, the set $\Omega$ is nonempty. Then, the sequence $\left\{x_n\right\}$ generated by 
Algorithm \ref{Alg1} converges strongly to the unique solution $x^*$ of  VIP for $F$ on $\Omega$.
\end{theorem}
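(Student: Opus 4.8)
The plan is to set $\Gamma_n=||x_n-x^*||^2$ and to package the whole iteration into one viscosity-type recursion to which the two-case bookkeeping of Lemma \ref{M2008} applies. Starting from Lemma \ref{lem2}, I would not keep the cross term $-2\alpha_n\langle x_{n+1}-x^*,F(\bar z_n)\rangle$ as is; instead I would return to inequality (\ref{eq:3**}), expand $t_n=\bar z_n-\alpha_n F(\bar z_n)$ inside $||t_n-x^*||^2$, and split $\langle \bar z_n-x^*,F(\bar z_n)\rangle=\langle \bar z_n-x^*,F(\bar z_n)-F(x^*)\rangle+\langle \bar z_n-x^*,F(x^*)\rangle$. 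The $\eta$-strong monotonicity of $F$ bounds the first term below by $\eta||\bar z_n-x^*||^2$, and substituting (\ref{eq:1*}) into the resulting factor $(1-2\eta\alpha_n)$ produces a recursion of the shape
\[
\Gamma_{n+1}\le (1-\phi_n)\Gamma_n+\phi_n\psi_n-R_n,
\]
where $\phi_n=2\eta\alpha_n$, $\psi_n=\frac{1}{\eta}\langle F(x^*),x^*-\bar z_n\rangle+\frac{\alpha_n}{2\eta}||F(\bar z_n)||^2$, and $R_n=(1-2\eta\alpha_n)\left[(1-2\rho c_1)||\bar y_n-x_n||^2+(1-2\rho c_2)||\bar y_n-\bar z_n||^2\right]+||x_{n+1}-t_n||^2\ge 0$. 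By Condition 3 and Lemma \ref{lem3}, $\phi_n\in(0,1)$ for large $n$, $\sum\phi_n=+\infty$, and $\{\psi_n\}$ is bounded.

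The engine of the argument is then an asymptotic-regularity implication that I would prove once and invoke in both cases: whenever $R_{m_k}\to 0$ along a subsequence $\{m_k\}$, one gets $||\bar y_{m_k}-x_{m_k}||\to0$, $||\bar y_{m_k}-\bar z_{m_k}||\to0$ and $||x_{m_k+1}-t_{m_k}||\to0$. From the furthest-index selection rules one upgrades this to \emph{all} components: $||z^i_{m_k}-x_{m_k}||\le||\bar z_{m_k}-x_{m_k}||\to0$ and $||u^j_{m_k}-t_{m_k}||\le||x_{m_k+1}-t_{m_k}||\to0$ for every $i\in I$, $j\in J$; Lemma \ref{lem.A11}(ii) then forces $||y^i_{m_k}-x_{m_k}||\to 0$, while $\beta^j_n\ge a>0$ gives $||S_jt_{m_k}-t_{m_k}||=\frac{1}{\beta^j_n}||u^j_{m_k}-t_{m_k}||\to0$. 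Since $||t_{m_k}-\bar z_{m_k}||=\alpha_{m_k}||F(\bar z_{m_k})||\to0$, all of $\{x_{m_k}\},\{\bar z_{m_k}\},\{y^i_{m_k}\},\{t_{m_k}\}$ share the same weak cluster points.

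The crux, and the step I expect to be hardest, is to show every such weak cluster point $p$ lies in $\Omega$. For the equilibrium part I would pass to the limit in Lemma \ref{lem1} along $\{m_k\}$ with $y^i_{m_k}\rightharpoonup p$: the right-hand side tends to $0$ by the regularity above, so $\limsup_k \rho f_i(y^i_{m_k},y)\ge 0$, and assumption A3 gives $\rho f_i(p,y)\ge \limsup_k \rho f_i(y^i_{m_k},y)\ge 0$ for all $y\in C$, i.e. $p\in EP(f_i,C)$ for each $i\in I$. For the fixed-point part, $t_{m_k}\rightharpoonup p$ together with $||S_jt_{m_k}-t_{m_k}||\to0$ and the demiclosedness at zero (B2) yield $S_jp=p$, i.e. $p\in Fix(S_j)$ for each $j\in J$; hence $p\in\Omega$. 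Because $x^*$ solves VIP (\ref{VIP2}), $\langle F(x^*),x^*-p\rangle\le0$ for every such $p$, and therefore $\limsup_k\psi_{m_k}\le0$ (the $\frac{\alpha_{m_k}}{2\eta}||F(\bar z_{m_k})||^2$ piece vanishes).

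It remains to feed this into Lemma \ref{M2008} with $\epsilon_n=\Gamma_n$. If $\{\Gamma_n\}$ is eventually non-increasing it converges, so $\Gamma_n-\Gamma_{n+1}\to0$ and $\phi_n\psi_n\to0$ force $R_n\to0$ along the whole sequence; the regularity and $\Omega$-membership steps give $\limsup_n\psi_n\le0$, and then $\Gamma_{n+1}-\Gamma_n\le\phi_n(\psi_n-\Gamma_n)$ together with $\sum\phi_n=+\infty$ rules out a positive limit, so $\Gamma_n\to0$. Otherwise I apply Lemma \ref{M2008} to obtain indices $\tau(n)\to\infty$ with $\Gamma_{\tau(n)}\le\Gamma_{\tau(n)+1}$; at these indices the recursion gives $R_{\tau(n)}\le\phi_{\tau(n)}\psi_{\tau(n)}\to0$ and $\Gamma_{\tau(n)}\le\psi_{\tau(n)}$, so the regularity argument applies along $\{\tau(n)\}$, yielding $\limsup_n\psi_{\tau(n)}\le0$ and hence $\Gamma_{\tau(n)}\to0$; the recursion then gives $\Gamma_{\tau(n)+1}\to0$, and the bound $0\le\Gamma_n\le\Gamma_{\tau(n)+1}$ of Lemma \ref{M2008} forces $\Gamma_n\to0$. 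In either case $x_n\to x^*$ strongly.
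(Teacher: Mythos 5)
Your proposal is correct, and it shares the paper's skeleton: boundedness via Lemma \ref{lem3}, the component estimates of Lemmas \ref{lem.A11} and \ref{lem1}, hypothesis A3 plus demiclosedness to place weak cluster points in $\Omega$, and the two-case analysis through Maing\'e's Lemma \ref{M2008}. Where you genuinely differ is in the decomposition of the key estimate. The paper keeps the raw cross term $-2\alpha_n\left\langle x_{n+1}-x^*,F(\bar z_n)\right\rangle$ from Lemma \ref{lem2}, bounds it by a constant to drive the asymptotic regularity, and then invokes the $\eta$-strong monotonicity of $F$ separately inside each case: in Case 1 it shows $\liminf_n\left\langle x_{n+1}-x^*,F(\bar z_n)\right\rangle\ge\eta\epsilon$ and derives a contradiction with $\sum_n\alpha_n=+\infty$ when $\epsilon>0$; in Case 2 it first gets $\left\langle x_{\tau(n)+1}-x^*,F(\bar z_{\tau(n)})\right\rangle\le 0$ and then uses monotonicity again to squeeze $\eta||\bar z_{\tau(n)}-x^*||^2$ to zero. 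You instead spend strong monotonicity once, up front, by expanding $||t_n-x^*||^2$ around $\bar z_n$ in (\ref{eq:3**}) and splitting $F(\bar z_n)$ at $F(x^*)$, which turns the whole scheme into the standard relaxation $\Gamma_{n+1}\le(1-\phi_n)\Gamma_n+\phi_n\psi_n-R_n$ with $\phi_n=2\eta\alpha_n$. What this buys is a uniform endgame: both cases become elementary manipulations of one recursion (in Case 2 the inequality $\Gamma_{\tau(n)}\le\Gamma_{\tau(n)+1}$ immediately gives $R_{\tau(n)}\le\phi_{\tau(n)}\psi_{\tau(n)}\to 0$ and $\Gamma_{\tau(n)}\le\psi_{\tau(n)}$), so monotonicity never has to be re-invoked and the paper's contradiction and squeeze arguments collapse to one-line consequences of $\limsup_n\psi_n\le 0$; the cost is only notational overhead. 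Two small points you should make explicit in a full write-up: the recursion and the sign $R_n\ge 0$ are valid only once $2\eta\alpha_n<1$, i.e.\ for all large $n$ (which suffices, since every argument is asymptotic); and the claim $\limsup_k\psi_{m_k}\le 0$ needs the routine extraction of a further weakly convergent subsequence realizing the $\limsup$, exactly as the paper does when it selects the subsequence in (\ref{eq:15}) and (\ref{eq:22***}).
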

\begin{proof}
Since $\left\{x_n\right\}$, $\left\{\bar{z}_n\right\}$ are bounded and $F$ is $L$ - Lipschitz continuous, there exists a constant $K>0$ such that 
\begin{equation}\label{eq:7*}
2\left|\left\langle x_{n+1}-x^*,F(\bar{z}_n)\right\rangle\right|\le K.
\end{equation}
Set $\epsilon_n=||x_n-x^*||^2$. Using Lemma \ref{lem2} and the relation (\ref{eq:7*}), we obtain 
\begin{equation}\label{eq:8}
\epsilon_{n+1}-\epsilon_n+(1-2\rho c_1)||\bar{y}_n-x_n||^2+(1-2\rho c_2)||\bar{y}_n-\bar{z}_n||^2+||x_{n+1}-\bar{z}_n||^2\le\alpha_n K.
\end{equation}
We consider two cases.\\
\textbf{Case 1.} There exists $n_0$ such that $\left\{\epsilon_n\right\}$ is decreasing for all $n\ge n_0$. Thus, from $\epsilon_n\ge 0$ for all $n\ge 0$, 
there exists the limit of $\left\{\epsilon_n\right\}$, i.e., $\epsilon_n\to \epsilon\ge 0$ as $n\to\infty$. Hence, it follows from (\ref{eq:8}), the hypothesis
of $\rho$ and $\alpha_n\to 0$ that
\begin{equation}\label{eq:9}
||\bar{y}_n-x_n||\to 0,~||\bar{y}_n-\bar{z}_n||\to 0,~||x_{n+1}-\bar{z}_n||\to 0.
\end{equation}
From the relation (\ref{eq:9}) and the triangle inequality, we obtain 
\begin{equation}\label{eq:10}
||x_{n+1}-x_n||\to 0,~||\bar{z}_n-x_n||\to 0.
\end{equation}
From the definition of $\bar{z}_n$, we obtain $||z_n^i-x_n||\le ||\bar{z}_n-x_n||,~\forall i\in I$. This together with (\ref{eq:10}) implies that
\begin{equation}\label{eq:11}
||z^i_n-x_n||\to 0,~\forall i\in I.
\end{equation}
From Lemma \ref{lem.A11}(ii) and the triangle inequality,
\begin{eqnarray*}
(1-2\rho c_1)||y_n^i-x_n||^2&+&(1-2\rho c_2)||y_n^i-z_n^i||^2\le ||x_n-x^*||^2-||z_n^i-x^*||^2\\ 
&\le& \left(||x_n-x^*||-||z_n^i-x^*||\right)\left(||x_n-x^*||+||z_n^i-x^*||\right)\\
&\le&||x_n-z_n^i||\left(||x_n-x^*||+||z_n^i-x^*||\right).
\end{eqnarray*}
Passing to the limit in the last inequality and using the hypothesis of $\rho$, the boundedness of $\left\{x_n\right\},\left\{z_n^i\right\}$ and (\ref{eq:11}), 
we obtain
\begin{equation}\label{eq:12}
||y_n^i-x_n||\to 0,~||y_n^i-z_n^i||\to 0,~\forall i\in I.
\end{equation}
Since $\left\{\bar{z}_n\right\}$ is bounded, without loss of generality, we can assume that there exists a subsequence $\left\{\bar{z}_m\right\}$ of 
$\left\{\bar{z}_n\right\}$ converging weakly to $p$ such that
\begin{equation}\label{eq:15}
\lim_{n\to\infty}\inf \left\langle \bar{z}_n-x^*, Fx^*\right\rangle=\lim_{m\to\infty}\left\langle \bar{z}_m-x^*,Fx^*\right\rangle.
\end{equation}
Now, we prove that 
$p\in \Omega$. Indeed, it follows from Lemma \ref{lem1} that, for all $y\in C$,
\begin{eqnarray*}
\rho f_i(y^i_m,y)&\ge& \left\langle y_m^i-x_m, y_m^i-z_m^i\right\rangle -c_1\rho ||y_m^i-x_m||^2 -c_2\rho ||z_m^i-y_m^i||^2 + \left\langle z_m^i-x_m, z_m^i-y\right\rangle. 
\end{eqnarray*}
From $\bar{z}_n\rightharpoonup p$ and the relations (\ref{eq:10}) and (\ref{eq:12}), we obtain $x_n\rightharpoonup p$, $y^i_n\rightharpoonup p$, 
$z^i_n\rightharpoonup p$. Thus, letting $m\to \infty$ in the last inequality and using hypothesis A3, $\rho>0$ and (\ref{eq:12}), we obtain 
$$ 0\le \lim\sup_{m\to\infty}f_i(y_n,y)\le f_i(p,y),~\forall y\in C,~\forall i\in I. $$
Thus, $p\in \cap_{i\in I}EP(f_i,C)$. 
Moreover, since $u_m^j=(1-\beta_m^j)t_m+\beta_m^jS_jt_m$ and $\beta_m^j\ge a>0$,
\begin{equation}\label{eq:13}
||t_m-S_jt_m||=\frac{1}{\beta_m^j}||u_m^j-t_m||\le \frac{1}{a}||u_m^j-t_m||\le \frac{1}{a}||x_{m+1}-t_m||,
\end{equation}
in which the last inequality is followed from the definition of $x_{m+1}$. From the definition of $t_m$, $\alpha_m\to 0$ and the boundedness of 
$\left\{\bar{z}_m\right\}$, we obtain 
\begin{equation}\label{eq:14}
||t_m-\bar{z}_m||=\alpha_m||F(\bar{z}_m)||\to 0.
\end{equation}
This together with (\ref{eq:9}) implies that $||x_{m+1}-t_m||\to 0$. Thus, it follows from (\ref{eq:13}) that 
$||t_m-S_jt_m||\to 0$ and $t_m\rightharpoonup p$. Since $S_j$ is demiclosed at zero, $p\in \cap_{j\in J}Fix(S_j)$. Hence, $p\in \Omega$.

In order to finish Case 1, we show that 
$$\epsilon_n=||x_n-x^*||^2\to \epsilon=0.$$
Since $||x_n-\bar{z}_n||\to 0$, $||\bar{z}_n-x^*||^2\to \epsilon$. From (\ref{eq:15}), $\bar{z}_n\rightharpoonup p\in \Omega$ and $x^*\in VIP(F,\Omega)$, 
one has
\begin{equation}\label{eq:16}
\lim_{n\to\infty}\inf \left\langle \bar{z}_n-x^*, Fx^*\right\rangle=\lim_{m\to\infty}\left\langle \bar{z}_m-x^*,Fx^*\right\rangle=\left\langle p-x^*,Fx^*\right\rangle\ge 0.
\end{equation}
From the $\eta$ - strongly monotonicity of $F$,
\begin{eqnarray*}
\left\langle x_{n+1}-x^*, F\bar{z}_n\right\rangle&=&\left\langle x_{n+1}-\bar{z}_n, F\bar{z}_n\right\rangle+\left\langle\bar{z}_n-x^*, F\bar{z}_n\right\rangle\\ 
&=& \left\langle x_{n+1}-\bar{z}_n, F\bar{z}_n\right\rangle+\left\langle\bar{z}_n-x^*, F\bar{z}_n-Fx^*\right\rangle+\left\langle\bar{z}_n-x^*,Fx^*\right\rangle\\ 
&\ge&\left\langle x_{n+1}-\bar{z}_n, F\bar{z}_n\right\rangle+\eta||\bar{z}_n-x^*||^2+\left\langle\bar{z}_n-x^*,Fx^*\right\rangle.
\end{eqnarray*}
This together with $||x_{n+1}-\bar{z}_n||\to 0$, $||\bar{z}_n-x^*||^2\to \epsilon$ and (\ref{eq:16}) implies that
\begin{equation}\label{eq:17}
\lim_{n\to\infty}\inf \left\langle x_{n+1}-x^*, F\bar{z}_n\right\rangle\ge \eta\epsilon.
\end{equation}
Assume that $\epsilon>0$, then there exists a positive integer $n_0$ such that 
\begin{equation}\label{eq:18}
\left\langle x_{n+1}-x^*, F\bar{z}_n\right\rangle\ge \frac{1}{2}\eta\epsilon,~\forall n\ge n_0.
\end{equation}
It follows from Lemma \ref{lem2} that 
\begin{equation}\label{eq:19}
||x_{n+1}-x^*||^2\le||x_n-x^*||^2-2\alpha_n\left\langle x_{n+1}-x^*,F(\bar{z}_n)\right\rangle.
\end{equation}
Combining (\ref{eq:18}) and (\ref{eq:19}), we obtain 
\begin{equation*}
||x_{n+1}-x^*||^2-||x_n-x^*||^2\le -\alpha_n\eta\epsilon,~\forall n\ge n_0,
\end{equation*}
or 
\begin{equation*}
\epsilon_{n+1}-\epsilon_n\le -\alpha_n\eta\epsilon,~\forall n\ge n_0.
\end{equation*}
Thus,
\begin{equation}\label{eq:20}
\epsilon_{n+1}-\epsilon_{n_0}\le -\eta\epsilon \sum_{k=n_0}^{n+1}\alpha_k.
\end{equation}
Since $\eta>0$, $\epsilon>0$ and $\sum_{n=1}^{\infty}\alpha_n=+\infty$, it follows from (\ref{eq:20}) that $\epsilon_n\to -\infty$. This is contradiction. 
Therefore $\epsilon=0$ or $x_n\to x^*$.\\
\textbf{Case 2.} There exists a subsequence $\left\{\epsilon_{n_i}\right\}$ of $\left\{x_n\right\}$ such that $\epsilon_{n_i}\le \epsilon_{n_i+1}$ for all $i\ge 0$. \\
It follows from Lemma \ref{M2008} that 
\begin{equation}\label{eq:21}
\epsilon_{\tau(n)}\le \epsilon_{\tau(n)+1},~\epsilon_n \le \epsilon_{\tau(n)+1},~\forall n\ge n_0.
\end{equation}
where $\tau(n)=\max\left\{k\in N:n_0\le k\le n, ~\epsilon_k\le \epsilon_{k+1}\right\}$. Furthermore, the sequence 
$\left\{\tau(n)\right\}_{n\ge n_0}$ is non-decreasing and $\tau(n)\to +\infty$ as $n\to\infty$.

It follows from (\ref{eq:8}), the hypothesises 
of $\rho$, $\epsilon_{\tau(n)}\le \epsilon_{\tau(n)+1}$ and $\alpha_{\tau(n)}\to 0$ that
\begin{equation}\label{eq:22}
||\bar{y}_{\tau(n)}-x_{\tau(n)}||\to 0,~||\bar{y}_{\tau(n)}-\bar{z}_{\tau(n)}||\to 0,~||x_{{\tau(n)}+1}-\bar{z}_{\tau(n)}||\to 0.
\end{equation}
These together with the triangle inequality imply that $||x_{{\tau(n)}}-\bar{z}_{\tau(n)}||\to 0.$ Thus, from the definition of the index $i_{\tau(n)}$, we 
have 
\begin{equation}\label{eq:22*}
||x_{\tau(n)}-{z}^i_{\tau(n)}||\to 0,~\forall i\in I.
\end{equation}
From Lemma \ref{lem.A11}(ii) and the triangle inequality,
\begin{eqnarray*}
(1-2\rho c_1)||y_{\tau(n)}^i-x_{\tau(n)}||^2&+&(1-2\rho c_2)||y_{\tau(n)}^i-z_{\tau(n)}^i||^2\le ||x_{\tau(n)}-x^*||^2-||z_{\tau(n)}^i-x^*||^2\\ 
&\le& \left(||x_{\tau(n)}-x^*||-||z_{\tau(n)}^i-x^*||\right)\left(||x_{\tau(n)}-x^*||+||z_{\tau(n)}^i-x^*||\right)\\
&\le&||x_{\tau(n)}-z_{\tau(n)}^i||\left(||x_{\tau(n)}-x^*||+||z_{\tau(n)}^i-x^*||\right).
\end{eqnarray*}
Passing to the limit in the last inequality and using the hypothesis of $\rho$, the boundedness of $\left\{x_{\tau(n)}\right\},\left\{z_{\tau(n)}^i\right\}$ 
and (\ref{eq:22*}), 
we obtain
\begin{equation}\label{eq:22**}
||y_{\tau(n)}^i-x_{\tau(n)}||\to 0,~||y_{\tau(n)}^i-z_{\tau(n)}^i||\to 0,~\forall i\in I.
\end{equation}
Since $\left\{\bar{z}_{\tau(n)}\right\}$ is bounded, there exists a subsequence 
$\left\{\bar{z}_{\tau(n_k)}\right\}$ of $\left\{\bar{z}_{\tau(n)}\right\}$ converging weakly to $p$ such that 
\begin{equation}\label{eq:22***}
\lim\inf_{n\to\infty}\left\langle \bar{z}_{\tau(n)}-x^*,F(x^*)\right\rangle=\lim_{k\to\infty}\left\langle \bar{z}_{\tau(n_k)}-x^*,F(x^*)\right\rangle
\end{equation}
From (\ref{eq:22}), (\ref{eq:22**}) and $\bar{z}_{\tau(n_k)}\rightharpoonup p$, we also have $x_{\tau(n_k)}\rightharpoonup p$, 
$y^i_{\tau(n_k)}\rightharpoonup p$, $z^i_{\tau(n_k)}\rightharpoonup p$. Now, we show that $p\in \Omega$. Indeed, it follows from Lemma 
\ref{lem1} that, for all $y\in C$,
\begin{eqnarray*}
\rho f_i(y^i_{\tau(n_k)},y)&\ge& \left\langle y_{\tau(n_k)}^i-x_{\tau(n_k)}, y_{\tau(n_k)}^i-z_{\tau(n_k)}^i\right\rangle -c_1\rho ||y_{\tau(n_k)}^i-x_{\tau(n_k)}||^2 \\
&&-c_2\rho ||z_{\tau(n_k)}^i-y_{\tau(n_k)}^i||^2 + \left\langle z_{\tau(n_k)}^i-x_{\tau(n_k)}, z_{\tau(n_k)}^i-y\right\rangle. 
\end{eqnarray*}
Passing to the limit in the last inequality as $k\to\infty$ and using (\ref{eq:22*}), (\ref{eq:22**}), $\rho>0$ and A3, we obtain 
$$ 0\le \lim\sup_{k\to\infty} f_i(y^i_{\tau(n_k)},y)\le f_i(p,y),~\forall y\in C,~\forall i\in I.$$
Thus, $p\in \cap_{i\in I}EP(f_i,C)$. From $u_{\tau(n_k)}^j=(1-\beta^j_{\tau(n_k)})t_{\tau(n_k)}+\beta^j_{\tau(n_k)}S_jt_{\tau(n_k)}$ 
and $\beta^j_{\tau(n_k)}\ge a>0$, we see that
\begin{equation}\label{eq:23*}
||t_{\tau(n_k)}-S_jt_{\tau(n_k)}||=\frac{1}{\beta^j_{\tau(n_k)}}||u_{\tau(n_k)}^j-t_{\tau(n_k)}||\le \frac{1}{a}||u_{\tau(n_k)}^j-t_{\tau(n_k)}||\le \frac{1}{a}||x_{{\tau(n_k)}+1}-t_{\tau(n_k)}||,
\end{equation}
in which the last inequality is followed from the definition of $x_{\tau(n_k)+1}$. It follows from the definition of $t_{\tau(n_k)}$, $\alpha_{\tau(n_k)}\to 0$ 
and the boundedness of $\left\{\bar{z}_{\tau(n_k)}\right\}$ that
\begin{equation}\label{eq:23**}
||t_{\tau(n_k)}-\bar{z}_{\tau(n_k)}||=\alpha_{\tau(n_k)}||F(\bar{z}_{\tau(n_k)})||\to 0.
\end{equation}
This together with (\ref{eq:22}) implies that $||x_{{\tau(n_k)}+1}-t_{\tau(n_k)}||\to 0$. Thus, from (\ref{eq:23*}) and $x_{{\tau(n_k)}+1}\rightharpoonup p$, 
we obtain  $||t_{\tau(n_k)}-S_jt_{\tau(n_k)}||\to 0$ and $t_{\tau(n_k)}\rightharpoonup p$. Since $S_j$ is demiclosed at zero, $p\in \cap_{j\in J}Fix(S_j)$. 
Hence, $p\in \Omega$.

Now, we prove that $x_{\tau(n_k)}\to x^*$. It follows from Lemma \ref{lem2} that 
\begin{eqnarray*}
2\alpha_{\tau(n)}\left\langle x_{\tau(n)+1}-x^*,F(\bar{z}_{\tau(n)})\right\rangle&\le&\epsilon_{\tau(n)}-\epsilon_{\tau(n)+1}-(1-2\rho c_1)||
\bar{y}_{\tau(n)}-x_{\tau(n)}||^2\\
&&-(1-2\rho c_2)||\bar{y}_{\tau(n)}-\bar{z}_{\tau(n)}||^2-||x_{\tau(n)+1}-\bar{z}_{\tau(n)}||^2.
\end{eqnarray*}
Thus, 
\begin{equation}\label{eq:23}
\left\langle x_{\tau(n)+1}-x^*,F(\bar{z}_{\tau(n)})\right\rangle \le 0
\end{equation}
because of $\alpha_{\tau(n)}>0$, $\epsilon_{\tau(n)}\le \epsilon_{\tau(n)+1}$ and the hypothesis of $\rho$. From the $\eta$ - strong 
monotonicity and the relation (\ref{eq:23}),
\begin{eqnarray*}
\eta||\bar{z}_{\tau(n)}-x^*||^2&\le&\left\langle \bar{z}_{\tau(n)}-x^*, F\bar{z}_{\tau(n)}-Fx^*\right\rangle\\ 
&=& \left\langle \bar{z}_{\tau(n)}-x^*, F\bar{z}_{\tau(n)}\right\rangle-\left\langle \bar{z}_{\tau(n)}-x^*,Fx^*\right\rangle\\
&=& \left\langle \bar{z}_{\tau(n)}-x_{\tau(n)+1}, F\bar{z}_{\tau(n)}\right\rangle+\left\langle x_{\tau(n)+1}-x^*, F\bar{z}_{\tau(n)}\right\rangle-\left\langle \bar{z}_{\tau(n_k)}-x^*,Fx^*\right\rangle\\
&\le&\left\langle \bar{z}_{\tau(n)}-x_{\tau(n)+1}, F\bar{z}_{\tau(n)}\right\rangle-\left\langle \bar{z}_{\tau(n)}-x^*,Fx^*\right\rangle.
\end{eqnarray*}
This together with (\ref{eq:22}), (\ref{eq:22***}) and $\bar{z}_{\tau(n_k)}\rightharpoonup p$ implies that
\begin{eqnarray*}
\lim\sup_{n\to\infty}\eta||\bar{z}_{\tau(n)}-x^*||^2&\le&-\lim\inf_{n\to\infty}\left\langle \bar{z}_{\tau(n)}-x^*,Fx^*\right\rangle.\\ 
&=&-\lim_{k\to\infty}\left\langle \bar{z}_{\tau(n_k)}-x^*,Fx^*\right\rangle.\\ 
&=&-\lim_{k\to\infty}\left\langle p-x^*,Fx^*\right\rangle\le 0,
\end{eqnarray*}
in which the last inequality is followed from $p\in \Omega$ and $x^*\in VIP(F,\Omega)$. Thus 
\begin{equation*}
\lim_{n\to\infty} ||\bar{z}_{\tau(n)}-x^*||^2=0
\end{equation*}
because of $\eta>0$. This together with (\ref{eq:22}) implies 
that $\lim_{k\to\infty} ||x_{\tau(n)+1}-x^*||^2=0$. Thus, $\epsilon_{\tau(n)+1}\to 0$. It follows from (\ref{eq:21}) that 
$0\le\epsilon_n\le \epsilon_{\tau(n)+1}\to 0$. Hence, $\epsilon_n\to 0$ or $x_n\to x^*$ as $n\to\infty$. Theorem \ref{theo1} is proved.

\end{proof}
Next, by replacing the element $\bar{z}_n$ in Step 3 and the next one $x_{n+1}$ in Step 4 of Algorithm \ref{Alg1} by convex combinations of $z_n^i,~i\in I$ 
and of $u_n^j,~j\in I$, respectively, we come to the following algorithm. 
\begin{algorithm}\label{Alg2} \textbf{Initialization.} Choose $x_0\in C$. The parameters $\rho,~\alpha_n,~\beta^j_n,~w_n^i,~\gamma_n^j$ satisfy 
Condition 4 below.\\
\textbf{Step 1.} Find semultaneously approximations $y_n^i$, $i\in I$
$$y_n^i = {\rm argmin} \{ \rho f_i(x_n, y) +\frac{1}{2}||x_n-y||^2:  y \in C\}.$$
\textbf{Step 2.} Find semultaneously approximations $z_n^i$, $i\in I$
$$ z_n^i = {\rm argmin} \{ \rho f_i(y_n^i, y) +\frac{1}{2}||x_n-y||^2:  y \in C\}. $$
\textbf{Step 3.} Compute 
$$ z_n=\sum_{i\in I}w_n^i z_n^i, $$
$$ x_{n+1}=\sum\limits_{j\in J}\gamma_n^j\left[(1-\beta_n^j)t_n+\beta_n^j S_j t_n\right], $$
where $t_n={z}_n-\alpha_n F({z}_n)$. Set $n=n+1$ and go back \textbf{Step 1}.
\end{algorithm}
From Step 3 of Algorithm \ref{Alg2}, we see that the problems of computing $z_n$ and $x_{n+1}$ are more simpler than those of computing 
$\bar{z}_n$ and $x_{n+1}$ in Steps 3, 4 of Algorithm \ref{Alg1}. This is also illustrated in our numerical experiments in Sec. \ref{example} 
where time for execution of this algorithm is less consuming than Algorithm \ref{Alg1} and the parallel hybrid extragradient method in \cite{HMA2016}. 
In order to obtain the convergence of Algorithm \ref{Alg2}, we install the following condition on the control parameters in Algorithm \ref{Alg2}.\\

\textbf{Condition 4}\qquad Condition 3 holds and
\begin{itemize}
\item [$\rm (iv)$] $w_n^i\in (0,1)$, $\sum\limits_{i\in I}w_n^i=1$, $\lim\limits_{n}\inf w^i_n>0$ for all $i\in I$ and $n\ge 0$.
\item [$\rm (v)$] $\gamma_n^j\in (0,1)$, $\sum\limits_{j\in J}\gamma_n^j=1$, $\lim\limits_{n}\inf\gamma^j_n>0$ for all $j\in J$ and $n\ge 0$.
\end{itemize}
\begin{theorem}\label{theo2}
The concusion of Theorem \ref{theo1} remains true for Algorithm \ref{Alg2} under Conditions 1, 2 and 4.
\end{theorem}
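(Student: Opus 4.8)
The plan is to re-run the entire argument of Theorem \ref{theo1}, since Algorithm \ref{Alg2} differs from Algorithm \ref{Alg1} only in that the single furthest intermediate point $\bar z_n$ is replaced by the convex combination $z_n=\sum_{i\in I}w_n^i z_n^i$, and the single Mann step $u_n^{j_n}$ is replaced by the convex combination $x_{n+1}=\sum_{j\in J}\gamma_n^j v_n^j$ with $v_n^j=(1-\beta_n^j)t_n+\beta_n^j S_j t_n$. The only genuinely new ingredient is the convexity of $x\mapsto\|x-x^*\|^2$, which turns the componentwise estimates of Lemmas \ref{lem.A11} and \ref{M2008a} into estimates for the averages. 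Concretely, I would first record the two fundamental per-iteration inequalities: from convexity and Lemma \ref{lem.A11}(ii),
\[
\|z_n-x^*\|^2\le\sum_{i\in I}w_n^i\|z_n^i-x^*\|^2\le\|x_n-x^*\|^2-E_n,
\]
where $E_n=\sum_{i\in I}w_n^i\big[(1-2\rho c_1)\|y_n^i-x_n\|^2+(1-2\rho c_2)\|y_n^i-z_n^i\|^2\big]\ge 0$; and from convexity together with the quasi-nonexpansiveness bound of Lemma \ref{M2008a}(i) applied to each $S_{j,\beta_n^j}$,
\[
\|x_{n+1}-x^*\|^2\le\sum_{j\in J}\gamma_n^j\|v_n^j-x^*\|^2\le\|t_n-x^*\|^2-D_n,
\]
where $D_n=\sum_{j\in J}\gamma_n^j\beta_n^j(1-\beta-\beta_n^j)\|S_jt_n-t_n\|^2\ge 0$ is nonnegative by Condition 3(iii).

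Next I would establish boundedness exactly as in Lemma \ref{lem3}: writing $t_n=G^{\alpha_n}(z_n)$ and invoking Lemma \ref{lemY2001}(ii) gives $\|t_n-x^*\|\le(1-\alpha_n\tau/\mu)\|z_n-x^*\|+\alpha_n\|F(x^*)\|$, while the two displayed inequalities yield $\|z_n-x^*\|\le\|x_n-x^*\|$ and $\|x_n-x^*\|\le\|t_{n-1}-x^*\|$; the same induction then bounds $\{t_n\}$, hence $\{x_n\},\{z_n\}$, and finally $\{z_n^i\},\{y_n^i\},\{u_n^j\}$. Substituting $t_n=z_n-\alpha_n F(z_n)$ into the second display and using the first gives the master estimate
\[
\epsilon_{n+1}-\epsilon_n+E_n+D_n\le\alpha_n K',\qquad \epsilon_n=\|x_n-x^*\|^2,
\]
for a constant $K'>0$ coming from the boundedness of $\{z_n\}$ and the Lipschitz continuity of $F$; this is the analog of (\ref{eq:8}).

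From here the two-case analysis transfers with $\bar z_n$ replaced by $z_n$. In the monotone Case 1 the master estimate forces $E_n\to 0$ and $D_n\to 0$; crucially, since each summand is nonnegative and $\liminf_n w_n^i>0$, $\liminf_n\gamma_n^j>0$ by Condition 4(iv)--(v), this yields $\|y_n^i-x_n\|\to 0$, $\|y_n^i-z_n^i\|\to 0$ for every $i$ and $\|S_jt_n-t_n\|\to 0$ for every $j$. Consequently $\|z_n^i-x_n\|\to 0$, so $\|z_n-x_n\|\le\sum_{i\in I} w_n^i\|z_n^i-x_n\|\to 0$, and $\|t_n-z_n\|=\alpha_n\|F(z_n)\|\to 0$, whence $x_n,z_n,y_n^i,z_n^i,t_n$ all share the same weak cluster point $p$. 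Passing to the limit in Lemma \ref{lem1} along a subsequence and using hypothesis A3 gives $p\in\cap_{i\in I}EP(f_i,C)$, while $\|S_jt_n-t_n\|\to 0$ with $t_n\rightharpoonup p$ and the demiclosedness B2 give $p\in\cap_{j\in J}Fix(S_j)$, so $p\in\Omega$. Finally I would expand $\langle z_n-x^*,F(z_n)\rangle\ge\eta\|z_n-x^*\|^2+\langle z_n-x^*,F(x^*)\rangle$ via $\eta$-strong monotonicity and, using $x^*\in VIP(F,\Omega)$ so that $\liminf\langle z_n-x^*,F(x^*)\rangle=\langle p-x^*,F(x^*)\rangle\ge 0$, conclude $\epsilon=\lim\epsilon_n=0$ exactly as in (\ref{eq:17})--(\ref{eq:20}). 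Case 2 runs through Lemma \ref{M2008}: on the indices $\tau(n)$ the master estimate gives $E_{\tau(n)},D_{\tau(n)}\to 0$ and $\langle z_{\tau(n)}-x^*,F(z_{\tau(n)})\rangle\le\tfrac{\alpha_{\tau(n)}}{2}\|F(z_{\tau(n)})\|^2\to 0$, and strong monotonicity then forces $\|z_{\tau(n)}-x^*\|\to 0$, hence $\epsilon_{\tau(n)+1}\to 0$ and $0\le\epsilon_n\le\epsilon_{\tau(n)+1}\to 0$.

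The main obstacle is not conceptual but bookkeeping: every place in the proof of Theorem \ref{theo1} that exploited the defining maximality of $\bar z_n$ and $x_{n+1}$ (the inequalities $\|z_n^i-x_n\|\le\|\bar z_n-x_n\|$ and $\|u_n^j-t_n\|\le\|x_{n+1}-t_n\|$) must now be replaced by an averaging argument, and one must justify the passage ``weighted sum $\to 0$ implies each component $\to 0$''. This is exactly where Condition 4(iv)--(v) (the strict positivity of $\liminf w_n^i$ and $\liminf\gamma_n^j$) is indispensable; without it a single component could fail to vanish and the weak-limit identification $p\in\Omega$ would break down.
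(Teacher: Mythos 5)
Your proposal is correct and follows essentially the same route as the paper: convexity of $\|\cdot\|^2$ upgrades the componentwise estimates of Lemmas \ref{lem.A11} and \ref{M2008a} to the averaged iterates, Condition 4's positive $\liminf$ weights convert vanishing weighted sums back into componentwise vanishing, boundedness is obtained as in Lemma \ref{lem3}, and the two-case (Lemma \ref{M2008}) analysis of Theorem \ref{theo1} is rerun. The only (harmless) deviations are bookkeeping: you keep the fixed-point residual term $D_n$ inside the master estimate, whereas the paper drops it to $\|x_{n+1}-t_n\|^2$ and recovers $\|S_jt_n-t_n\|\to 0$ through a separate claim (Claim 3), and you track $\langle z_n-x^*,F(z_n)\rangle$ (tolerating an extra $\alpha_n^2\|F(z_n)\|^2$ term) where the paper's choice of $\langle x_{n+1}-x^*,F(z_n)\rangle$ makes the $\alpha_n^2$ terms cancel exactly.
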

\begin{proof} We divide the proof of Theorem \ref{theo2} into several steps.\\
 \textbf{Claim 1.}
Suppose that $x^*\in VIP(F,\Omega)$. Then 
\begin{eqnarray*}
||x_{n+1}-x^*||^2&\le&||x_n-x^*||^2-(1-2\rho c_1)\sum_{i\in I}w_n^i ||{y}^i_n-x_n||^2-(1-2\rho c_2)\sum_{i\in I}w_n^i ||{y}^i_n-{z}^i_n||^2\\
&&-||x_{n+1}-{z}_n||^2-2\alpha_n\left\langle x_{n+1}-x^*,F({z}_n)\right\rangle.
\end{eqnarray*}
\textit{The proof of Claim 1.} 
From the convexity of $||.||^2$ and Lemma \ref{lem.A11}, we obtain 
\begin{eqnarray}
||{z}_n-x^*||^2&=& \sum_{i\in I}||w_n^i(z_n^i-x^*)||^2\le \sum_{i\in I}w_n^i||z_n^i-x^*||^2\le||x_n-x^*||^2\nonumber\\
&&-(1-2\rho c_1)\sum_{i\in I}w_n^i||{y}^i_n-x_n||^2-(1-2\rho c_2)\sum_{i\in I}w_n^i||{y}^i_n-\bar{z}_n||^2.\label{eq:25}
\end{eqnarray}
Setting $u_n^j:=S_{j,\beta_n}t_n=(1-\beta_n^j)t_n+\beta^j_nS_jt_n$. From the definitions of $x_{n+1}$ and $u_n^j$, we have 
$x_{n+1}=\sum\limits_{j\in J}\gamma_n^ju_n^j$ and $||u_n^{j}-t_n||^2=(\beta_n^j)^2||t_n-S_{j}t_n||^2$. Thus, by the convexity of $||.||^2$, 

\begin{eqnarray}
||x_{n+1}-t_n||^2&=&||\sum\limits_{j\in J}\gamma_n^j(u_n^j-t_n)||^2\le \sum\limits_{j\in J}\gamma_n^j||u_n^j-t_n)||^2=\sum\limits_{j\in J}\gamma_n^j(\beta_n^j)^2||t_n-S_{j}t_n||^2.\label{eq:26}
\end{eqnarray}
From the hypothesis of $\beta_n^j$,
\begin{equation}\label{eq:26*}
\frac{1-\beta-\beta^j_n}{\beta_n^j}\ge 1.
\end{equation}
By the convexity of $||.||^2$, Lemma \ref{M2008a}(i), $\sum\limits_{j\in J}\gamma_k^j=1$ and the relations (\ref{eq:26}), (\ref{eq:26*}), we obtain
\begin{eqnarray}
||x_{n+1}-x^*||^2&=&||\sum\limits_{j\in J}\gamma_n^j(u_n^j-x^*)||^2\le \sum\limits_{j\in J}\gamma_n^j||u_n^j-x^*||^2=\sum\limits_{j\in J}\gamma_n^j||S_{j,\beta_n}t_n-x^*||^2\nonumber\\
&\le&\sum\limits_{j\in J}\gamma_n^j\left[||t_n-x^*||^2-\beta_n^j(1-\beta-\beta_n^j)||S_{j}t_n-t_n||^2\right]\nonumber\\
&=&||t_n-x^*||^2-\sum\limits_{j\in J}\beta_n^j(1-\beta-\beta_n^j)\gamma_n^j ||S_{j}t_n-t_n||^2 \nonumber\\
&=&||t_n-x^*||^2-\sum\limits_{j\in J}\frac{1-\beta-\beta^j_n}{\beta_n^j}(\beta_n^j)^2\gamma_n^j ||S_{j}t_n-t_n||^2 \nonumber\\
&\le&||t_n-x^*||^2-\sum\limits_{j\in J}(\beta_n^j)^2\gamma_n^j ||S_{j}t_n-t_n||^2 \label{eq:27}\\
&\le&||t_n-x^*||^2-||x_{n+1}-t_n||^2\nonumber.
\end{eqnarray}
Thus
\begin{equation}\label{eq:28}
||x_{n+1}-x^*||^2\le||t_n-x^*||^2-||x_{n+1}-t_n||^2\le ||t_n-x^*||^2.
\end{equation}
This together with (\ref{eq:25}), the definition of $t_n$ implies that
\begin{eqnarray*}
||x_{n+1}-x^*||^2&\le&||t_n-x^*||^2-||x_{n+1}-t_n||^2\\
&=&||{z}_n-\alpha_n F({z}_n)-x^*||^2-||x_{n+1}-({z}_n-\alpha_n F({z}_n))||^2\\
&=&||{z}_n-x^*||^2-2\alpha_n \left\langle x_{n+1}-x^*,F({z}_n)\right\rangle-||x_{n+1}-{z}_n||^2\\
&\le&||x_n-x^*||^2-(1-2\rho c_1)\sum_{i\in I}w_n^i||{y}^i_n-x_n||^2-(1-2\rho c_2)\sum_{i\in I}w_n^i||{y}^i_n-{z}^i_n||^2\\
&&-2\alpha_n \left\langle x_{n+1}-x^*,F({z}_n)\right\rangle-||x_{n+1}-{z}_n||^2
\end{eqnarray*}
\textbf{Claim 2.} The sequences $\left\{x_n\right\}$, $\left\{y^i_n\right\}$, $\left\{z^i_n\right\}$, $\left\{t_n\right\}$ are 
bounded for all $i\in I$ and $j\in J$.\\
\textit{The proof of Claim 2.} Repeating the proof of Lemma \ref{lem3}, we can conclude that $\left\{t_n\right\}$ is a bounded sequence. It 
follows from (\ref{eq:28}) that $\left\{x_n\right\}$ is bounded. The boundedness of $\left\{y^i_n\right\}$, $\left\{z^i_n\right\}$ is followed from 
Lemma \ref{lem.A11}(ii).\\
\textbf{Claim 3.} If $\left\{x_m\right\}$ is some subsequence of $\left\{x_n\right\}$ such that $||x_{m+1}-t_m||\to 0$ then $||S_{j}t_m-t_m||\to 0$ for all $j\in J$.\\
\textit{The proof of Claim 3.} From $\beta^j_m\ge a>0$, the relation (\ref{eq:27}) and the triangle inequality,
\begin{eqnarray*}
a^2\sum\limits_{j\in J}\gamma_m^j ||S_{j}t_m-t_m||^2&\le&\sum\limits_{j\in J}(\beta_m^j)^2\gamma_m^j ||S_{j}t_m-t_m||^2\\
&\le&||t_m-x^*||^2-||x_{m+1}-x^*||^2\\ 
&=& \left(||t_m-x^*||-||x_{m+1}-x^*||\right)\left(||t_m-x^*||+||x_{m+1}-x^*||\right)\\
&\le&||t_m-x_{m+1}||\left(||t_m-x^*||+||x_{m+1}-x^*||\right)
\end{eqnarray*}
Passing to the limit in the last inequality as $m\to\infty$ and using the hypothesis $||t_m-x_{m+1}||\to 0$, the boundedness of $\left\{x_m\right\}$, 
$\left\{t_m\right\}$, we obtain 
$$ \sum\limits_{j\in J}\gamma_m^j ||S_{j}t_m-t_m||^2\to 0. $$
This together with the hypothesis $\lim\inf_{n} \gamma_n^j>0$ yields the desired conclusion.\\
\textbf{Claim 4.} $x_n\to x^*$ as $n\to\infty$, where $x^*$ is the unique solution of VIP (\ref{VIP2}).\\
\textit{The proof of Claim 4}. 
Since $\left\{x_n\right\}$, $\left\{{z}_n\right\}$ are bounded and $F$ is $L$ - Lipschitz continuous, there exists a constant $K>0$ such that 
\begin{equation}\label{eq:7a}
2\left|\left\langle x_{n+1}-x^*,F({z}_n)\right\rangle\right|\le K.
\end{equation}
Set $\epsilon_n=||x_n-x^*||^2$. Using Lemma \ref{lem2}, we obtain 
\begin{equation}\label{eq:8a}
\epsilon_{n+1}-\epsilon_n+(1-2\rho c_1)\sum_{i\in I}w_n^i||{y}^i_n-x_n||^2+(1-2\rho c_2)\sum_{i\in I}w_n^i||{y}^i_n-{z}^i_n||^2+||x_{n+1}-{z}_n||^2\le\alpha_n K.
\end{equation}
We consider two cases.\\
\textbf{Case 1.} There exists $n_0$ such that $\left\{\epsilon_n\right\}$ is decreasing for all $n\ge n_0$. Since $\epsilon_n\ge 0$ for all $n\ge 0$, 
there exists the limit of $\left\{\epsilon_n\right\}$, i.e., $\epsilon_n\to \epsilon$ as $n\to\infty$. Thus, it follows from (\ref{eq:8a}), the facts $1-2\rho c_1>0$, 
$1-2\rho c_2>0$, $\lim\inf_n w_n^i>0$ and $\alpha_n\to 0$ that
\begin{equation}\label{eq:9a}
||y_n^i-x_n||\to 0,~||y_n^i-z_n^i||\to 0,~||x_{n+1}-{z}_n||\to 0,~\forall i\in I.
\end{equation}
Using (\ref{eq:9a}) and repeating the proof of Case 1 in Theorem \ref{theo1}, we obtain $x_n\to x^*$.\\
\textbf{Case 2.} There exists a subsequence $\left\{\epsilon_{n_i}\right\}$ of $\left\{x_n\right\}$ such that $\epsilon_{n_i}\le \epsilon_{n_i+1}$ for all $i\ge 0$. \\
It follows from Lemma \ref{M2008} that 
\begin{equation}\label{eq:21a}
\epsilon_{\tau(n)}\le \epsilon_{\tau(n)+1},~\epsilon_n \le \epsilon_{\tau(n)+1},~\forall n\ge n_0.
\end{equation}
where $\tau(n)=\max\left\{k\in N:n_0\le k\le n, ~\epsilon_k\le \epsilon_{k+1}\right\}$. Furthermore, the sequence 
$\left\{\tau(n)\right\}_{n\ge n_0}$ is non-decreasing and $\tau(n)\to +\infty$ as $n\to\infty$.
It follows from (\ref{eq:8a}), the hypothesises 
of $\rho$, $\lim\inf_{n}w_n^i>0$, $\epsilon_{\tau(n)}\le \epsilon_{\tau(n)+1}$ and $\alpha_{\tau(n)}\to 0$ that
\begin{equation}\label{eq:22a}
||{y}^i_{\tau(n)}-x_{\tau(n)}||\to 0,~||{y}^i_{\tau(n)}-{z}^i_{\tau(n)}||\to 0,~||x_{{\tau(n)}+1}-{z}_{\tau(n)}||\to 0,~\forall i\in I.
\end{equation}
Using (\ref{eq:22a}) and repeating the proof of Case 2 in Theorem \ref{theo1}, we obtain $x_n\to x^*$. Theorem \ref{theo2} is proved.
\end{proof}
\section{A numerical example}\label{example}
In this section, we perform a numerical example to illustrate the convergence of Algorithms \ref{Alg1}, \ref{Alg2} and compare them with 
the parallel hybrid extragradient method (PHEM), see \cite[Algorithm 1]{HMA2016}. All programs are written 
in Matlab 7.0 and computed on a PC Desktop Intel(R) Core(TM) i5-3210M CPU @ 2.50GHz 2.50 GHz, RAM 2.00 GB.

We consider the bifunctions $f_i$ which are generalized from the Nash-Cournot equilibrium model in \cite{FP2002,QMH2008} defined by
\begin{equation}\label{fi}
f_i(x,y)=\left\langle P_ix+Q_iy+q_i,y-x\right\rangle,~i\in I=\left\{1,2,\ldots,5\right\},
\end{equation}
where $q_i\in \Re^m$ ($m=10$) and $P_i,~Q_i$ are matrices of order $m$ such that $Q_i$ is symmetric, positive semidefinite and $Q_i-P_i$ is 
negative semidefinite. The feasible set $C\in \Re^m$ is a polyhedral convex set as 
$$ C=\left\{ x\in \Re^m: Ax\le b\right\}, $$
where $A\in \Re^{m\times k}$ is a matrix and $b$ is a positive vector in $\Re^k$ ($k=20$). Let $T_j,~j\in J=\left\{1,2,\ldots,20\right\}$ be half-spaces 
defined by $T_j=\left\{x\in \Re^m:\left\langle x,h_j\right\rangle\le l_j\right\}$, where $h_j\in \Re^m$ and 
$l_j$ are positive real numbers. Define the mappings $S_j:C\to C$ defined by $S_j=P_C P_{T_j}$. The operator $F(x)=x-a$ where 
$a=(1,1,\ldots,1)^T\in \Re^m$. The bifunctions $f_i$ satisfy Condition 1 with $c_1^i=c_2^i=||P_i-Q_i||/2$, see Lemma 6.2 in \cite{QMH2008}. We 
here chose $c_1=c_2=\max\left\{c_1^i:i\in I\right\}$. Since the mappings $S_j$ are nonexpansive, they are $\beta$ - demicontractive with $\beta=0$.
In the mentioned algorithms, we need to solve the following optimization program
$$ \arg\min\left\{\rho f_i(x_n,y)+\frac{1}{2}||x_n-y||^2:y\in C\right\} $$
or the convex quadratic problem
\begin{equation}\label{min}
\arg\min\left\{\frac{1}{2}y^TH_iy+b_i^Ty:y\in C\right\}
\end{equation}
where $H_i=2\rho Q_i+I$ and $b_i=\rho (P_i x_n-Q_i x_n+q_i)-x_n$ to obtain the approximation $y_n^i$. Similarly, 
$z_n^i$ solves the following program
\begin{equation}\label{min1}
\arg\min\left\{\frac{1}{2}y^T\widehat{H}_iy+\widehat{b}_i^Ty:y\in C\right\}
\end{equation}
where $\widehat{H}_i={H}_i$ and $\widehat{b}_i=\rho (P_i y^i_n-Q_i y^i_n+q_i)-x_n$. Problems (\ref{min}), (\ref{min1}) can be 
effectively solved, for instance, by the MATLAB Optimization Toolbox. All projections onto half-spaces are explicit and onto polyhedral convex sets of 
Algorithm 1 in \cite{HMA2016} are rewritten equivalently to convex quadratic problems. 

In below experiments, all entries of $A$, $h_j$ are randomly generated in $[-m,m]$ and of $b$, $l_j$ in $[1,m]$, the vectors $q_i$ are the zero vector.
All entries of $P_i$, $Q_i$ are also generated randomly\footnote{We randomly chose 
$\lambda_{1k}^i\in [-m,0],~\lambda_{2k}^i\in [0,m],~ k=1,\ldots,m,~i=1\ldots,N$. Set $\widehat{Q}_1^i$, $\widehat{Q}_2^i$ as two 
diagonal matrixes with eigenvalues 
$\left\{\lambda_{1k}^i\right\}_{k=1}^m$ and $\left\{\lambda_{2k}^i\right\}_{k=1}^m$, respectively. Then, we make a positive 
semidefinite matrix $Q_i$ and a negative semidefinite matrix $T_i$ by using random orthogonal matrixes with $\widehat{Q}_2^i$ and $\widehat{Q}_1^i$, 
respectively. Finally, set $P_i=Q_i-T_i$} such that they satisfy the mentioned conditions above. It is easy to see that $0\in \cap_{i\in I}EP(f_i,C)$ and $\cap_{j\in J}Fix(S_j)=C\cap (\cap_{j\in J}H_j)$. With 
choosing $b$ and $l_j$ above, then $0\in \cap_{j\in J}Fix(S_j)$, thus $0\in \Omega$. To check whether $\left\{x_n\right\}$ converges to $x^*=0$ or not, 
we use the function $D_n=||x_n-x^*||$ for $n=0,1,2,\ldots$. The convergence of $\left\{D_n\right\}$ to $0$ implies that $\left\{x_n\right\}$ converges to the solution of Problem 
\ref{VIPoverEPandFPP}. We chose the starting point $x_0=(1,1,\ldots,1)^T\in \Re^m$, $\rho=\frac{1}{4c_1}$, $w_n^i=\frac{1}{N}$, 
$\gamma_n^j=\frac{1}{M}$, $\beta_n^j=\frac{1}{4}$ for all $i,~j,~n$. We perform two experiments for all algorithms with $\alpha_n=\frac{1}{(n+1)^{0.5}}$ or 
$\alpha_n=\frac{1}{n+1}$.  Figures \ref{fig1} and \ref{fig2} describe the behavior of $D_n$ with $\alpha_n=\frac{1}{(n+1)^{0.5}}$ and 
$\alpha_n=\frac{1}{n+1}$, resp., for 1000 first iterations. From these figures, we see that the convergence of Algorithm \ref{Alg1} is the best in 
both two cases. In the case $\alpha_n=\frac{1}{n+1}$, the convergence rate of Algorithm \ref{Alg1} is better than the case 
$\alpha_n=\frac{1}{(n+1)^{0.5}}$ and the obtained tolerance is $D_n<10^{-5}$ after 1000 first iterations. The times for execution 
of Algorithm \ref{Alg1} are smaller those of PHEM in two cases. The reason for this is that in Algorithm \ref{Alg1}, we do not need to construct 
two sets $C_n$ and $Q_n$ and find the projection onto their intersection. For Algorithm \ref{Alg2}, although the convergence rate is the slowest, but 
the times for execution is the smallest. This is obvious because in Algorithm \ref{Alg2} we have not to find the furthest approximations and construct two set $C_n$ and $Q_n$ per 
each iteration. This algorithm is the simplest in computing.

\begin{figure}[htb] 
\begin{center}
\includegraphics[width=120mm]{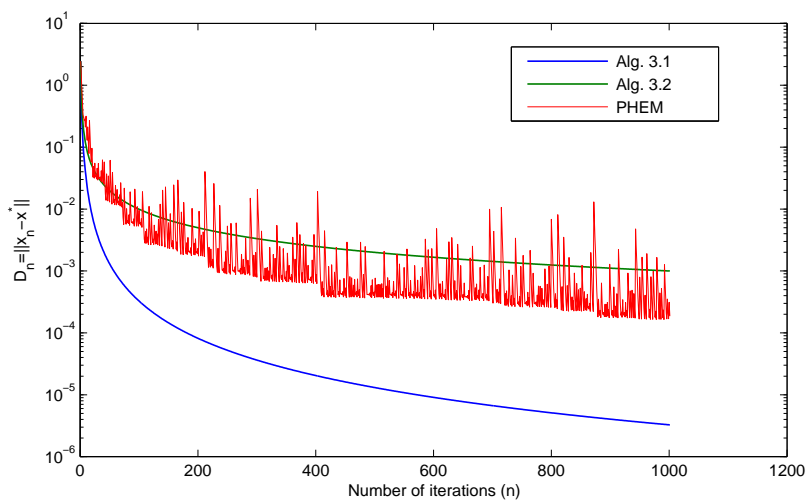}
\caption{Behavior of $D_n=||x_n-x^*||$ for Algorithms \ref{Alg1}, \ref{Alg2} and PHEM with $\alpha_n=\frac{1}{n+1}$ 
(The execution times 1000 first iterations are 58.29s, 49.78s and 80.23s, resp.)} 
\label{fig1}
\end{center}
\end{figure}
\begin{figure}[htb] 
\begin{center}
\includegraphics[width=120mm]{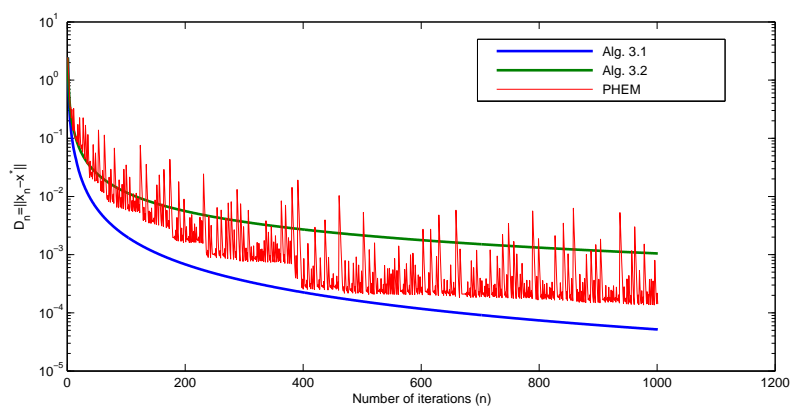}
\caption{Behavior of $D_n=||x_n-x^*||$ for Algorithms \ref{Alg1}, \ref{Alg2} and PHEM with $\alpha_n=\frac{1}{(n+1)^{0.5}}$  
(The execution times for 1000 first iterations are 60.34s, 51.32s and 84.43s, resp.)} 
\label{fig2}
\end{center}
\end{figure}
\section{Concluding} In this paper, we have proposed two parallel extragradient - viscosity methods for finding a particular common solution of 
a finite family of equilibrium problems for pseudomonotone and Lipschitz-type continuous bifunctions and a finite family of fixed point problems 
for demicontractive mappings. The considered particular element is the unique solution of a variational inequality problem on the common solution 
set of two families. The proposed algorithms can be considered as improvements of some previously known hybrid methods in computations. A 
numerical example is performed to illustrate the convergence of the algorithms and compare them with the parallel hybrid extragradient method.

\end{document}